\newtheorem{thm}{Theorem}[section]
\newtheorem{rmk}{Remark}[section]
\newtheorem{exam}{Example}
\title{Gradient-based estimation of Manning's friction coefficient from noisy data}
\author{Victor M. Calo\footnote{Applied Mathematics \& Computational Science and Earth
Science \& Engineering, King Abdullah University of Science and Technology, Thuwal, Saudi Arabia
(victor.calo@kaust.edu.sa, nathaniel.collier@kaust.edu.sa, hany.radwan@kaust.edu.sa)}
\and Nathan Collier\footnotemark[1]
\and Matthias Gehre\footnote{Center for Industrial Mathematics and Department of Mathematics,
University of Bremen, Bremen 28359, Germany (mgehre@math.uni-bremen.de)}\and Bangti Jin\footnote{Institute for
Applied Mathematics and Computational Science and Department of Mathematics, Texas A\&M University, College
Station, Texas 77843-3368, USA (btjin@math.tamu.edu)} \and Hany Radwan\footnotemark[1]}
\date{\today}
\begin{document}
\maketitle

\begin{abstract}
We study the numerical recovery of Manning's roughness coefficient for the
diffusive wave approximation of the shallow water equation. We
describe a conjugate gradient method for the numerical
inversion. Numerical results for one-dimensional model are presented
to illustrate the feasibility of the approach. Also
we provide a proof of the differentiability of the weak form
with respect to the coefficient as well as the continuity and boundedness of the linearized operator
under reasonable assumptions using the maximal parabolic regularity
theory.  \\
\noindent\textbf{Keywords}: diffusive shallow water equation, parameter
identification
\end{abstract}

\section{Introduction}

The diffusive wave approximation (DSW) of the shallow water equations (SWE)
is often used to model overland flows such as floods, dam breaks, and
flows through vegetated areas \cite{XanthopoulosKoutitas:1976,HromadkaBerenbrock:1985,FengMolz:1997}. 
The SWE result from the full
Navier-Stokes system with the assumption that the vertical momentum
scales are small relative to those of the horizontal momentum. This
assumption reduces the vertical momentum equation to a hydrostatic
pressure relation, which is integrated in the vertical direction to
arrive at a two-dimensional system known as the SWE. The DSW further simplifies the SWE by
assuming that the horizontal momentum can be linked to the water
height by an empirical formula, such as Manning's formula (also known
as Gauckler-Manning formula~\cite{Gauckler:1867})~\cite{Chow:1988,
  Yen:1992}. The DSW is a scalar parabolic equation which resembles
nonlinear diffusion.

The DSW gives rise to the following initial/boundary value problem for
the water height $u$
\begin{equation}\label{eqn:dsw}
  \left\{\begin{array}{ll}
      \dfrac{\partial u}{\partial t}-\nabla\cdot
      \left(k\left(u,\nabla u\right)\nabla u\right) =f
      & \mbox{in } \Omega\times(0,T]\\
        u= u_0 & \mbox{on } \Omega\times\{t=0\}\\
        \left(k\left(u,\nabla u\right)\nabla u\right) \cdot n = h
        &\mbox{on }\Gamma_N\times(0,T]\\
      u = g& \mbox{on } \Gamma_D\times(0,T]
  \end{array}\right.
\end{equation}
where $\Omega$ is an open bounded domain in $\mathbb{R}^d\ (d=1,2)$,
and $\Gamma_N$ and $\Gamma_D$ are disjoint subsets of the boundary
$\Gamma=\partial\Omega$ such that $\Gamma=\Gamma_N\cup\Gamma_D$. The
forcing function (e.g., rainfall acting as a source or infiltration
acting as a sink) $f:\Omega\times(0,T]\rightarrow\mathbb{R}$, the
initial condition $u_0:\Omega\rightarrow \mathbb{R}$, and the Neumann
and Dirichlet boundary conditions $h:\Gamma_N\times(0,T]\rightarrow
\mathbb{R}$ and $g:\Gamma_D\times(0,T]\rightarrow\mathbb{R}$ are
given. The diffusion coefficient $k(u,\nabla u)$ is given by
\begin{equation*}
  k(u,\nabla u)=\dfrac{1}{c_f}\dfrac{\left(u-z\right)^\alpha}{\left|\nabla{}u\right|^{1-\gamma}}=d_f\dfrac{\left(u-z\right)^\alpha}{\left|\nabla u\right|^{1-\gamma}},
\end{equation*}
where $z:\overline{\Omega}:\rightarrow \mathbb{R}^+$ is a nonnegative
time-independent function that represents the bathymetric or
topographic measurements available for the region under analysis. The
parameters $\gamma$ and $\alpha$ satisfy $0<\gamma\leq1$ and
$1<\alpha<2$. Following Manning's
formula~\cite{SantillanaDawson:2010}, we set these parameters to
$\gamma=\frac{1}{2}$ and $\alpha=\frac{5}{3}$. The
function $c_f$ (or equivalently $d_f=\frac{1}{c_f}$) represents
Manning's roughness coefficient, also known as the friction
coefficient. The typical values are available in the
literature~\cite{ArcementSchneider:1990,Walkowiak:2006}. We refer
to~\cite{AlonsoSantillanaDawson:2008, SantillanaDawson:2010} for recent
mathematical analysis and
to~\cite{SantillanaDawson:2010, ColierRadwanDalcinCalo:2011} for
efficient numerical algorithms.

In practice, the Manning coefficient $c_f$ is an empirically derived
coefficient, and historically it was expected to be constant and a
function of the roughness only. It is now widely accepted that the
values of the coefficients $c_f$ are only constant within some range
of flow rates, and depends strongly on many factors, including surface
roughness, sinuosity and flow reach. The presence of multiple
influencing factors renders a direct measurement of the coefficient
values less reliable and the use of a single-valued coefficient also
greatly constrains the practical utility of the DSW model to
faithfully capture important physical features of real open channel
flows, for which a spatially-varying coefficient is necessary due to
distinct physical characteristics of different regions.

In this study, we propose to estimate the distributed Manning
coefficient directly from water height measurements using inversion
techniques, that is, formulating an inverse problem for identifying
the friction coefficient $c_f$ from measurements of the water-height
acquired by sensors and infrared imaging. In comparison with direct
measurement, the proposed approach does not require a knowledge of the
physical properties of the overland environment, which might be
difficult to directly incorporate, and moreover, can naturally handle
spatially varying coefficients. Therefore, a reliable and efficient
estimate of this coefficient is expected to greatly broaden the scope
of the DSW model and to facilitate real-time simulation of the flow,
which is of immense significance in a number of
applications, for example flood prediction and flood hazard
assessment. The goal of the present study is to propose an inversion
algorithm and demonstrate its feasibility on simulation data for
one-dimensional models.

We briefly comment on relevant studies on the inverse problem. Due to
its conceived practical significance, it has received some attention
in the literature~\cite{DingJiaWang:2004,DingWang:2005}. For example,
Ding et al~\cite{DingJiaWang:2004} estimated the Manning's coefficient
in the SWE within the variational framework using
the limited memory quasi-Newton method, and compared its performance
with several other optimization algorithms. However, these works have
considered only the situation of recovering a few parameters (with a
maximum three), instead of estimating a distributed Manning's
coefficient like here. If the number of unknowns is small, the
ill-posed nature of the problem does not evidence directly. Therefore,
the present work represents a nontrivial step towards the important
task of estimating distributed Manning's roughness coefficients.

\section{Linearization of the forward map}

In this section we describe the linearization of the forward map
$F:d_f\rightarrow u(d_f)$, where $u(d_f)$ denotes the solution to
system~\eqref{eqn:dsw}. The linearization is required for solving the
forward problem (with a predictor-corrector method) and the inverse
problem (adjoint and sensitivity problems, see Section 3). Therefore,
its derivation is of independent interest. In order to make the
presentation accessible, we choose to derive the derivative operator
informally. A rigorous derivation can be found in
Appendix~\ref{app:derivative}.

The bilinear form of problem~\eqref{eqn:dsw} is
\begin{align*}
  B(u,w)&=\int_\Omega u_tw dx + \int_\Omega k(u,\nabla u)\nabla u\cdot\nabla wdx\\
  & = \left(u_t,w\right) + \left(k(u,\nabla u)\nabla u,\nabla
    w\right),
\end{align*}
and the linear form is
\begin{equation*}
  \ell(w) =\int_\Omega fwdx + \int_{\Gamma_N}hwds = (f,w) + (h,w)_{\Gamma_N}.
\end{equation*}
The weak formulation of the problem reads: For almost all $t\in(0,T]$, find $u$ with the
given Dirichlet boundary condition and initial data $u(0)=u_0$ such that
\begin{equation*}
  B(u,w) = \ell(w)\quad \forall w\in V,
\end{equation*}
where $V$ is an appropriate function space~\cite{SantillanaDawson:2010}.

We shall seek the G\^{a}teaux derivative of the bilinear form $B$ at
$u$, that is, $\frac{d}{d\epsilon}B(u+\epsilon v,w)|_{\epsilon=0}$. We
aim at deriving an explicit formula to facilitate further
developments. We proceed as follows. It follows from the product rule
for differentiation that
\begin{align*}
  \left.\dfrac{\partial B(u+\epsilon
        v,w)}{\partial\epsilon}\right|_{\epsilon=0}
  =&\dfrac{\partial}{\partial\epsilon}\left.\left[\left(u_t + \epsilon
        v_t,w\right)+ \left(d_f\dfrac{\left[\left(u+\epsilon
              v\right)-z\right]^\alpha}{ \left|\nabla u+\epsilon\nabla
            v\right|^{1-\gamma}}\left(\nabla u +\epsilon\nabla
          v\right),\nabla w
      \right) \right]\right|_{\epsilon=0}\\
  =&\left(v_t,w\right) + \left(d_f\frac{(u-z)^\alpha} {\left|\nabla
        u\right|^{1-\gamma}}\nabla v,\nabla w\right) + I + II,
\end{align*}
where the terms $I$ and $II$ are respectively given by
\begin{align*}
  I & =\left.\left (d_f\,\dfrac{\partial [u+\epsilon
      v-z]^\alpha}{\partial\epsilon}\frac{\nabla u+ \epsilon\nabla v}{|\nabla u
      +\epsilon\nabla v|^{1-\gamma}},\nabla w\right)\right|_{\epsilon=0}
     = \left(d_f\,\alpha\,\dfrac{\left(u-z\right)^{\alpha-1}}
      {\left|\nabla u\right|^{1-\gamma}}\,v\nabla u,\nabla w\right),
\end{align*}
and
\begin{align*}
  II & =\left.\left(d_f[u+\epsilon v-z]^\alpha \frac{\partial |\nabla
        u+\epsilon\nabla v|^{\gamma-1}}{\partial\epsilon} \left(\nabla
        u+\epsilon\nabla v\right),
      \nabla w\right)\right|_{\epsilon=0}\\
  & = \left(d_f\,\left(u-z\right)^\alpha \,(\gamma-1)\left|\nabla
      u\right|^{\gamma-2}
    \dfrac{\nabla u}{|\nabla u|}\cdot\nabla v \,\nabla u,\nabla w\right)\\
  & = \left(d_f\,(\gamma-1)\,\dfrac{\left(u-z\right)^\alpha}{\left|\nabla
      u\right|^{3-\gamma}}
    \nabla u\cdot\nabla v \,\nabla u,\nabla w\right).
\end{align*}
Here the second line follows from the relation $|\nabla u| =
\sqrt{\nabla u\cdot\nabla u} = (\nabla u\cdot\nabla u)^\frac{1}{2}$
that implies
\begin{align*}
  \left.\dfrac{\partial|\nabla u+\epsilon\nabla
      v|}{\partial\epsilon}\right|_{\epsilon=0} &=\dfrac{1}{2}\left(\left(\nabla
  u+\epsilon\nabla v\right)\cdot\left(\nabla u+\epsilon\nabla
  v\right)\right)^{-\frac{1}{2}}\,
  2(\nabla u+\epsilon\nabla v)\cdot\nabla v|_{\epsilon=0}
  =\dfrac{\nabla u\cdot\nabla v}{|\nabla u|}.
\end{align*}

Consequently, by combining all these identities, we arrive at the
following formula
\begin{align*}
  \left. \dfrac{\partial B\left(u+\epsilon
        v,w\right)}{\partial\epsilon} \right|_{\epsilon=0}
  =&\left(v_t,w\right) + \left(d_f\dfrac{(u-z)^\alpha}{|\nabla
      u|^{1-\gamma}}\nabla
    v,\nabla w\right)\\
  & \quad+ \left(d_f\alpha\frac{(u-z)^{\alpha-1}}{|\nabla u|^{1-\gamma}}
    \,v\,\nabla u,\nabla w\right)\\
  &\quad+ \left(d_f(\gamma-1)\frac{(u-z)^\alpha}{|\nabla u|^{1-\gamma}}
    \,\dfrac{\nabla u}{|\nabla u|}\cdot\nabla v \,
    \dfrac{\nabla u}{|\nabla u|},\nabla w\right)\\
  = &\left(v_t,w\right) + \left(k(u,\nabla
      u)\left(I-\left(1-\gamma\right)\tilde{\eta}
      \otimes\tilde{\eta}\right)\cdot\nabla v,\nabla w\right) \\
   &\quad+ \left(k(u,\nabla u)\,\frac{\alpha}{(u-z)}\, v\,\nabla
    u,\nabla w\right)
\end{align*}
where $I$ is the identity operator and the vector field
$\tilde{\eta}=\tfrac{\nabla u}{|\nabla u|}$ is the normalized gradient
vector field. The matrix-valued function $\tilde{\eta}\otimes\tilde{\eta}$ represents a
projection operator onto the gradient direction $\tilde{\eta}$. Hence,
the structure of the second term indicates that, for the linearized
problem, the diffusion along the gradient direction is attenuated by
$1-\gamma$, whereas the tangential component is not affected.  To
simplify notation we denote this attenuated diffusion tensor as
\[
  k_{\eta\eta}(u,\nabla u)=k(u,\nabla u)\left(I-\left(1-\gamma\right)\tilde{\eta} \otimes\tilde{\eta}\right).
\]
Meanwhile, the linearized problem has a convection term (the third
term), as a consequence of the nonlinear term involving $u$.  These
structural terms relate to the underlying physics of the model.

It follows directly from the definition of the G\^{a}teaux derivative, i.e., which is denoted by 
$v=u'(d_f)d\in V$ and characterizes the perturbation of $u(d_f)$ caused by a small
perturbation of the coefficient $d_f$ in the direction $d$ that it (in weak
formulation) satisfies
\begin{align*}
  \left(v_t,w\right) &+ \left(k_{\eta\eta}(u,\nabla u)
    \cdot\nabla v,\nabla w \right) + \left(\dfrac{\alpha\, k(u,\nabla
        u)}{(u-z)}\, v\,\nabla u,\nabla w\right) = -\left( d
    \dfrac{(u-z)^\alpha}{\left|\nabla u\right|^{1-\gamma}}\nabla
    u,\nabla w \right)
\end{align*}
and the initial condition is $v(0)=0$, since the initial data is not
affected by a perturbation of the friction coefficient.

\section{Inversion algorithm}

Now we turn to the inverse problem of reconstructing the coefficient
$d_f$ from the measurements of water heights. As a general rule, the
inverse problem is ill-posed in the sense that small perturbations in
the data can lead to large changes in the solution.  Hence we adopt a
regularization strategy by incorporating a penalty term into the cost
functional, following the pioneering idea of
Tikhonov~\cite{TikhonovArsenin:1977}.  More precisely, we consider the
following penalized misfit functional
\begin{equation*}
  J(d_f) = \frac{1}{2}\int_0^T\int_\Omega(u(d_f)-g)^2 dxdt +\frac{\delta}{2}\int_\Omega |\nabla d_f|^2 dx,
\end{equation*}
where the scalar $\delta$ is the regularization parameter, and $g$
denotes the noisy measurements of the water height $u(d_f)$. With
minor modifications, the algorithm discussed below can also be applied
to other measurements, for example, water height on the boundary or
scattered in the domain. The term $\|\nabla d_f\|_{L^2(\Omega)}^2$
enforces smoothness on the sought-for coefficient, and thereby
restores the numerical stability necessary for practical
computations. To numerically minimize the functional, we adopt the
conjugate gradient method. The method is of gradient descent type, and
it only requires evaluating the gradient of the functional $J(d_f)$ at
each step. We note that the conjugate gradient method has been
successfully applied to a wide variety of practical inverse problems,
such as in heat transfer and mechanics; see for
example,~\cite{Alifanov:1994,JinZou:2010} and references therein for
details.

To derive a computationally efficient gradient formula, we first note
that, given a (descent) direction $d$, the misfit term in the
functional $J$ can be approximated using a Taylor expansion and ignoring
higher order terms.
\begin{equation*}
  \begin{aligned}
    \frac{1}{2}\int_0^T\int_\Omega
    \left(u(d_f+d)-g\right)^2 &dxdt
    - \frac{1}{2}\int_0^T\int_\Omega\left(u(d_f\right)-g)^2dxdt=\\
    =&\frac{1}{2}\int_0^T\int_\Omega \left(u(d_f
        +d)-u(d_f)\right)\left(u(d_f
        +d)-g+u(d_f)-g\right)dxdt\\
    \approx&\int_0^T\int_\Omega u'(d_f)d
    \,\left(u(d_f)-g\right)dxdt.
  \end{aligned}
\end{equation*}
The approximation is reasonable if the magnitude of the direction $d$ is small.

The last formula can be further simplified with the help of the
adjoint problem for $p$, which in weak form reads
\begin{align*}
  \left(-p_t,w\right)+\left(k_{\eta\eta}(u,\nabla u)\cdot\nabla
    p,\nabla w\right) +\left(\dfrac{\alpha\, k(u,\nabla
        u)}{(u-z)}\, \nabla u\cdot\nabla
    p,w\right)=\left(u(d_f)-g,w\right)
 \end{align*}
together with the terminal condition $p(T)=0$. Recall the weak
formulation of the sensitivity problem $v=u'(d_f)d$, that is,
\begin{align*}
  \left(v_t,w\right) + \left(k_{\eta\eta}(u,\nabla u)\cdot
  \nabla v,\nabla w\right) +
  \left(\dfrac{\alpha\, k(u,\nabla u)}{(u-z)}\, v\,\nabla
  u,\nabla w\right)= - \left(d\frac{(u-z)^\alpha}{|\nabla u|^{1-\gamma}}\nabla
  u,\nabla w\right),
\end{align*}
together with the initial condition $v(0)=0$. Upon setting the test
function $w=u'(d_f)\,d$ and $w=p$ in the weak formulations for $p$ and
$u'(d_f)\,d$, respectively, we arrive at
\begin{align*}
  \int_0^T\left(u(d_f)-g,u'(d_f)\,d\right)dt
  = -\int_0^T\int_\Omega& d\,\frac{(u-z)^\alpha}{|\nabla
    u|^{1-\gamma}}\nabla p\cdot
  \nabla udxdt-\int_0^T\frac{d}{dt}(p,u'(d_f)d)dt\\
  =-\int_0^T\int_\Omega &d\,\frac{(u-z)^\alpha}{|\nabla
    u|^{1-\gamma}}\nabla p\cdot\nabla udxdt,
\end{align*}
where the last identity follows from the initial condition for
$u'(d_f)\,d$ and terminal condition for $p$. This relation yields the
following concise gradient formula of the functional $J(d_f)$
\begin{equation*}
  J'(d_f) = -\int_0^T\frac{(u-z)^\alpha}{|\nabla u|^{1-\gamma}}
\nabla p\cdot\nabla udt -\delta\Delta d_f.
\end{equation*}
We note that this gradient $J'(d_f)$ is inappropriate for updating the
coefficient $d_f$ directly due to its lack of desired regularity. The
consistent gradient of the functional with respect to $H^1(\Omega)$,
denoted by $J_s'(d_f)$, can be calculated as
\begin{equation*}
  -\Delta J_s'(d_f) + J_s'(d_f) = J'(d_f)
\end{equation*}
with a homogeneous Neumann boundary condition.

Now we can give a complete description of the conjugate gradient
method summarized in Algorithm~\ref{alg:cgm}. In the algorithm, one
has the freedom to choose the conjugate coefficient $\beta_k$ and the
step size $\theta_k$. There are several viable choices of the
conjugate coefficient~\cite{HagerZhang:2006}. One popular choice is
suggested by Fletcher-Reeves, which reads
\begin{equation*}
  \beta_{k-1} = \frac{\|J_s'(d_f^k)\|_{L^2(\Omega)}^2}{\|J_s'(d_f^{k-1})\|_{L^2(\Omega)}^2}
\end{equation*}
with the convention $\beta_0=0$, and then update the conjugate
direction $d_k$ with
\begin{equation*}
d_k = J_s'(d_f^k)  + \beta_{k-1}d_{k-1}.
\end{equation*}
Generally, the step size selection is of crucial importance for the
performance of the algorithm. We have opted for the following simple
rule. By means of a Taylor expansion of the objective function
$J(d_f^k-\theta d_k)$, with the forward solution $u(d_f^k-\theta d_k)$
linearized around $d_f^k$, we arrive at the following approximate
formula for determining an appropriate step size $\theta_k$
\begin{equation*}
  \theta_k = \frac{\langle r_k,u'(d_f^k)d_k\rangle_{L^2(0,T;L^2(\Omega))}+\delta\langle\nabla d_f^k,\nabla d_k\rangle_{L^2(\Omega)}}{
    \|u'(d_f^k)d_k\|_{L^2(0,T;L^2(\Omega))}^2+\delta\|\nabla d_k\|_{L^2(\Omega)}^2},
\end{equation*}
where $r_k=u(d_f^k)-g$ denotes the misfit (residual).  The
step size $\theta_k$ is determined to enforce a reduction the
functional value, that is,
$J\left(d_f^k-\theta_k\,J_s'(d_f^k)\right)\leq
J(d_f^k)$. Our experience with other inverse problems
indicates that this choice works reasonably well in
practice~\cite{JinZou:2010}. Advanced step size selection rules, such
as, Barzilai-Borwein rule with backtracking, maybe also be adopted to
further enhance the performance. The algorithm terminates if the
selected step size falls below $1.0\times10^{-3}$. Overall, each step
of the iteration invokes three forward solves: the (nonlinear) forward
solve for computing the map $u(d_f)$, the (linear) adjoint
solve for calculating the adjoint $p(d_f)$ and consequently
the gradient $J'(d_f)$ and the (linear) sensitivity solve
for selecting the step size $\theta$. The extra computational effort
for computing the smoothed gradient $J_s'(d_f)$ is marginal
compared with other steps due to its simple structure.

\begin{algorithm}[t]
	\caption{Conjugate gradient method.\label{alg:cgm}}
\begin{algorithmic}[1]
  \STATE Set $k=0$ and choose initial guess $d_f^0$.
  \REPEAT
  \STATE Solve direct problem with $d_f=d_f^k$, and determine
  residual $r_k=u(d_f^k)-g$.
  \STATE Solve adjoint problem with right hand side $r_k$.
  \STATE Calculate gradient $J_s'(d_f^k)$,
  conjugate coefficient $\beta_k$, and direction $d_k$.
  \STATE Solve the sensitivity problem with direction $d=d_k$.
  \STATE Compute step length $\theta_k$ in conjugate
  direction $d_k$.
  \STATE Update coefficient $d_f^k=d_f^k-\theta_kd_k$.
  \STATE Increase $k$ by one.
  \UNTIL A stopping criterion is satisfied.
  \STATE Output approximation $d_f$
\end{algorithmic}
\end{algorithm}

\section{Numerical experiments and discussions}
Here we present some numerical results for one-dimensional examples
to illustrate the feasibility of the proposed inversion
technique. The forward problem is discretized using piecewise linear
finite elements in space and the generalized-$\alpha$ method in time
(detailed in Appendix~\ref{app:gam}). The adjoint and sensitivity
problems are both solved with the generalized-$\alpha$ method.

The spatial domain $\Omega=[-2,2]$, and the mesh size $h$ is
$\frac{1}{4}$. The time interval is $\left[0,\frac{1}{2}\right]$, and
the time step size is $\frac{1}{40}$. This mesh was used for both
generating the exact data and used in the inversion step (i.e.,
adjoint problem and sensitivity problem). We note that we also
experimented with using finer mesh for generating the exact data, and
the reconstructions are identical. Also both the forward solution $u(d_f)$
and the coefficient $d_f$ are represented in this mesh. The initial
guess for the coefficient is $d_f=1$. The noisy data $g$ are generated
pointwise as
\begin{equation*}
  g = u(d_f^\dagger) + \varepsilon\max_{(x,t)\in\Omega\times[0,T]}\left(\left|u(d_f^\dagger)\right|\right)\zeta,
\end{equation*}
where $\varepsilon$ is the relative noise level, and the random
variable (noise) $\zeta$ follows a standard Gaussian
distribution. The choice of the regularization parameter $\delta$ is
crucial in any regularization
strategies~\cite{TikhonovArsenin:1977}. There have been intensive
studies on its appropriate choice which have led to systematical and
rigorous rules for choosing an appropriate value,
see~\cite{Jin:2009,ItoJinTakeuchi:2011} for recent progress. However,
in this preliminary study, we have opted for the conventional
trial-and-error approach.

We consider three examples: one with a smooth coefficient, and two
with a discontinuous coefficient. First, we consider the recovery of a
continuous coefficient.
\begin{exam}\label{exam:cont}
  The forward problem has a homogeneous Neumann boundary condition,
  and the initial condition $u_0$ is
  $u_0=-\frac{1}{4}x+\frac{3}{2}$. The exact coefficient is
  $d_f^\dagger(x)=1+\frac{1}{16}(x^2-4)^2$.
\end{exam}

Figure~\ref{fig:exam1conv}(a) and Table~\ref{tab:err} show the
numerical results for Example~\ref{exam:cont}, where $e$ is the
relative error of an approximation $d_f$, defined as
$e=\|d_f-d_f^\dagger\|_{L^2(\Omega)}/\|d_f^\dagger\|_{L^2(\Omega)}$. The
reconstructions are in reasonable agreement with the exact coefficient 
$d_f^\dagger$ for up to
$2\%$ noise in the data. Hence the proposed method is stable and
accurate. We note that the approximation near the boundary seems less
accurate compared to other regions. The error $e$ decreases as the
noise level $\epsilon$ decreases to zero, see also
Table~\ref{tab:err}. Overall, the convergence of the inversion
algorithm is rather steady, see Figures~\ref{fig:exam1conv}(b) and
(c).  While the functional value $J(d_f^k)$ decreases monotonically as
the iteration proceeds, the convergence of the error $e$ exhibits a
clear valley, indicating that a premature termination of the algorithm
might result in sub-optimal reconstructions.

\begin{table}
  \centering
  \caption{Numerical results (error $e$) for different noise levels.}
  \begin{tabular}{ccccc}
  \hline
  $\varepsilon$ & $0\%$ & $0.5\%$ & $1\%$ & $2\%$\\
  \hline
  Example~\ref{exam:cont}    & 5.94e-3 & 2.00e-2 & 2.90e-2 & 4.52e-2\\
  Example~\ref{exam:discont} & 4.49e-2 & 6.41e-2 & 7.49e-2 & 9.99e-2\\
  Example~\ref{exam:disc2}   & 4.05e-2 & 5.15e-2 & 5.94e-2 & 9.42e-2\\
  \hline
  \end{tabular}\label{tab:err}
\end{table}

\begin{figure}
\centering
  \begin{tabular}{ccc}
    \includegraphics[width=5.5cm]{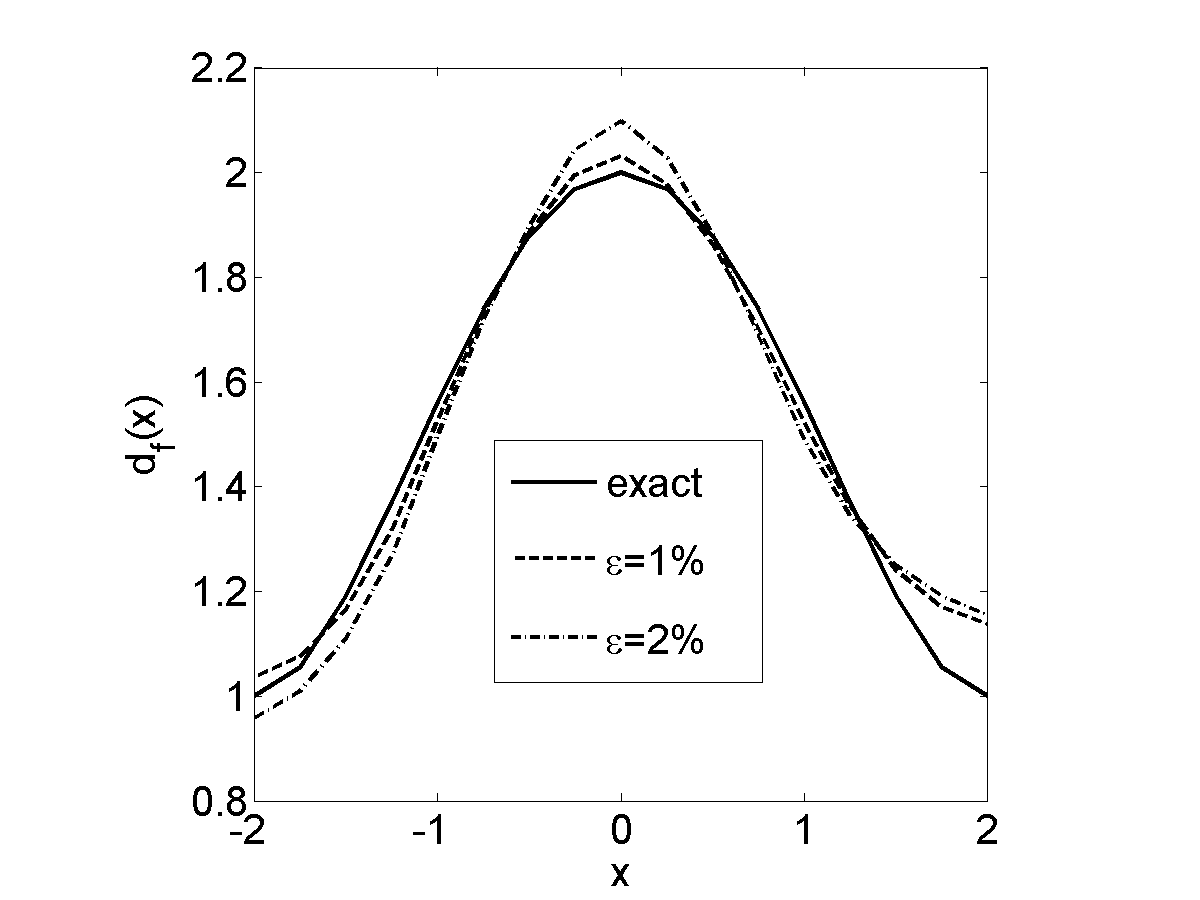} & \includegraphics[width=5.5cm]{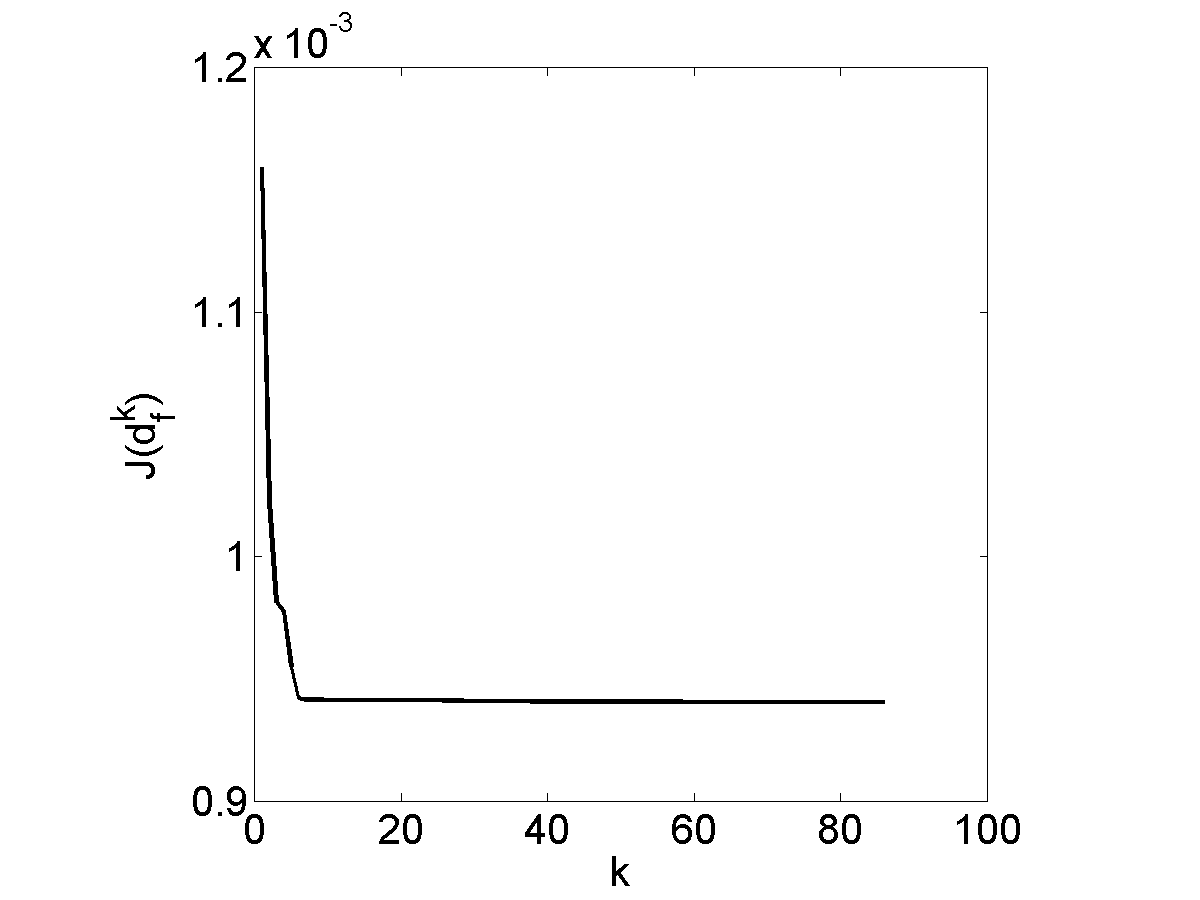}
    & \includegraphics[width=5.5cm]{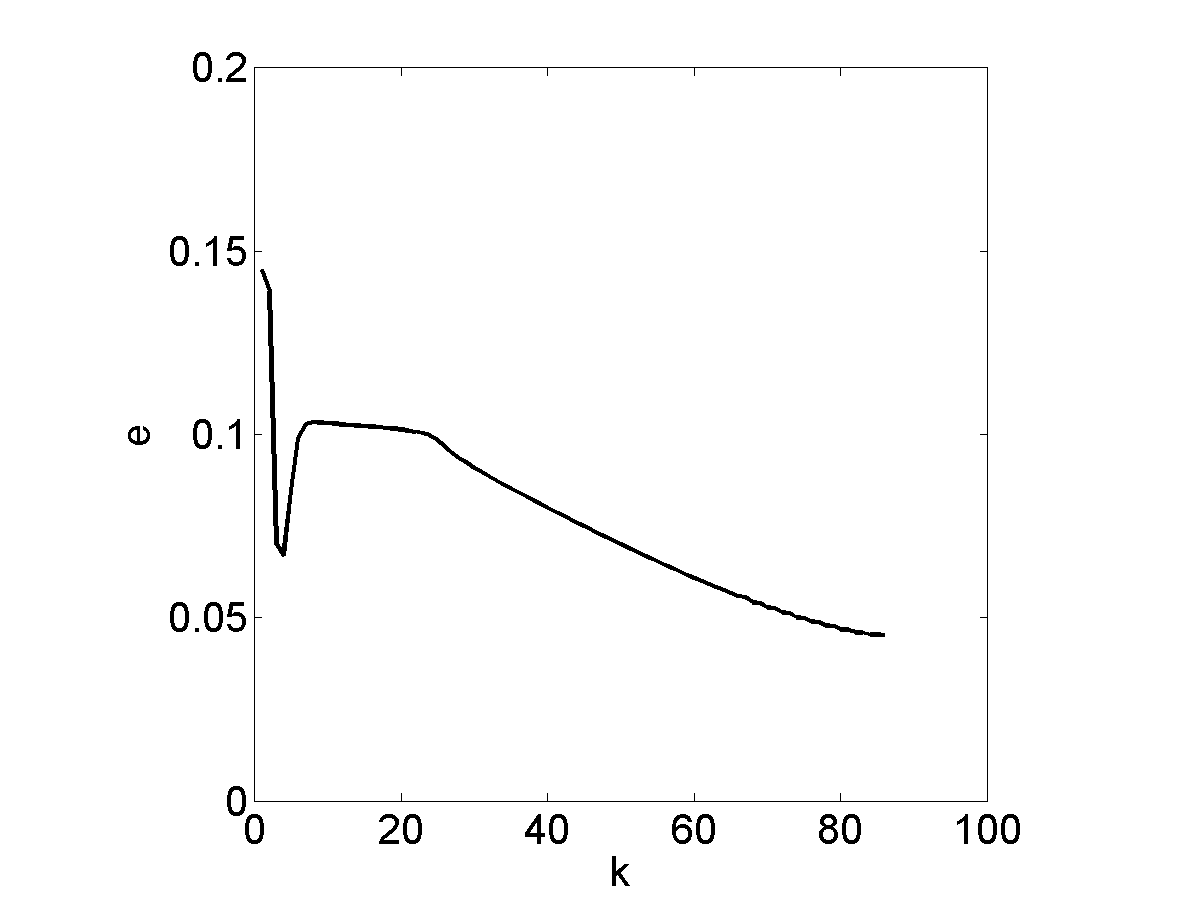}\\
     (a) reconstructions & (b) functional value $J(d_f^k) $ & (c) error $e$
  \end{tabular}
  \caption{Numerical results for Example~\ref{exam:cont}. Here the
    convergence of Algorithm~\ref{alg:cgm} is for $\varepsilon=2\%$
    noise.}
  \label{fig:exam1conv}
\end{figure}

Then we consider the recovery of a discontinuous coefficient.
\begin{exam}\label{exam:discont}
  The boundary condition and initial condition of the problem are
  identical to those in Example~\ref{exam:cont}. The exact
  coefficient is $d_f^\dagger=1+\chi_{[-\frac{5}{4},\frac{3}{4}]}$,
  where $\chi$ denotes the characteristic function.
\end{exam}

Figure~\ref{fig:exam2conv}(a) and Table~\ref{tab:err} present the
numerical results for Example~\ref{exam:discont}. The convergence of
the result with respect to the noise level $\varepsilon$ is again
clearly observed. The reconstructions capture the overall shape of the
true solution. However, the discontinuities are not well resolved,
even for exact data, and consequently the results are less accurate
compared with those for Example~\ref{exam:cont}. This is attributed to
the presence of discontinuities in the sought-for solution
$d_f^\dagger$, which cannot be accurately approximated using the
smoothness penalty $|\nabla d_f|_{L^2(\Omega)}^2$. Discontinuity
preserving penalties, such as, total variation, might be employed to
improve the resolution.  Nonetheless, the conjugate gradient algorithm
remains fairly steady, see Figures~\ref{fig:exam2conv}(b) and (c).

\begin{figure}
  \centering
  \begin{tabular}{ccc}
    \includegraphics[width=5.5cm]{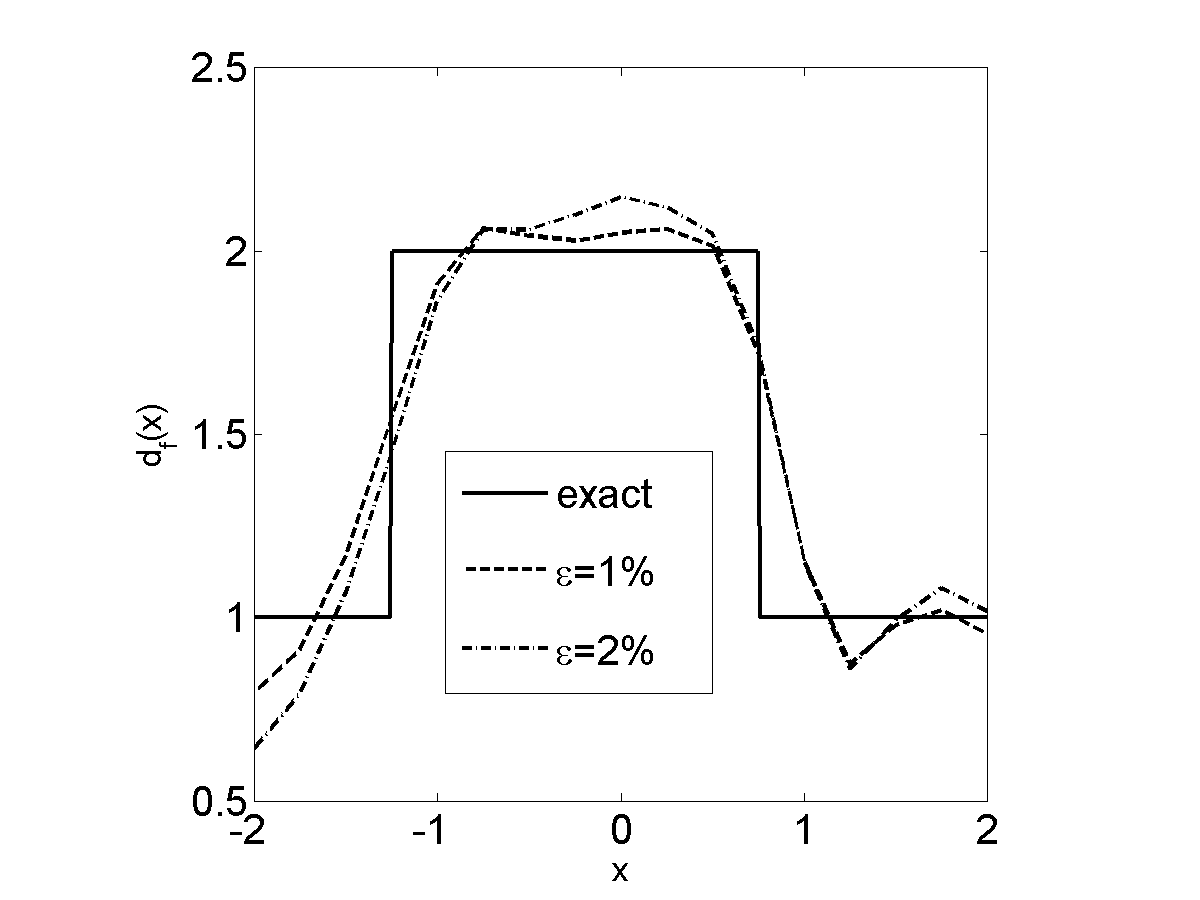} & \includegraphics[width=5.5cm]{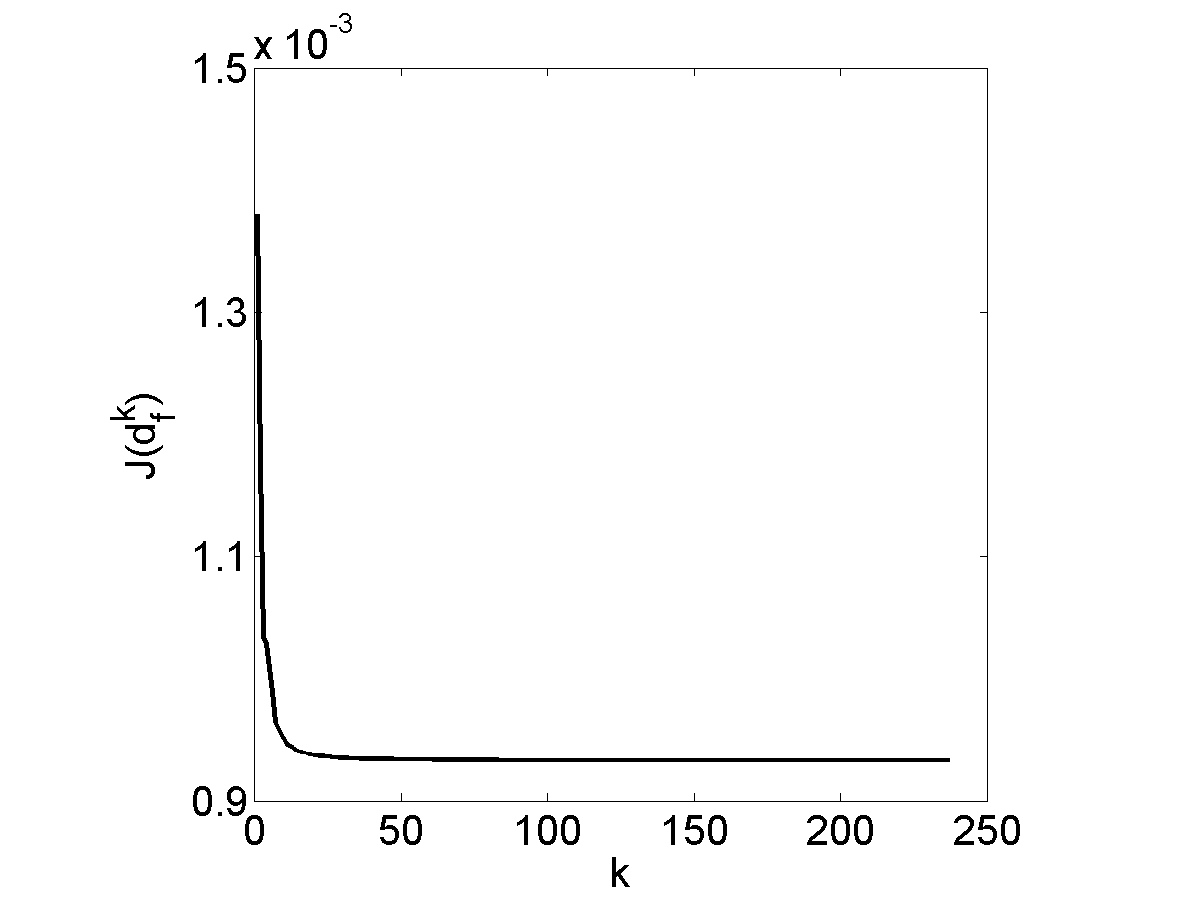}
     & \includegraphics[width=5.5cm]{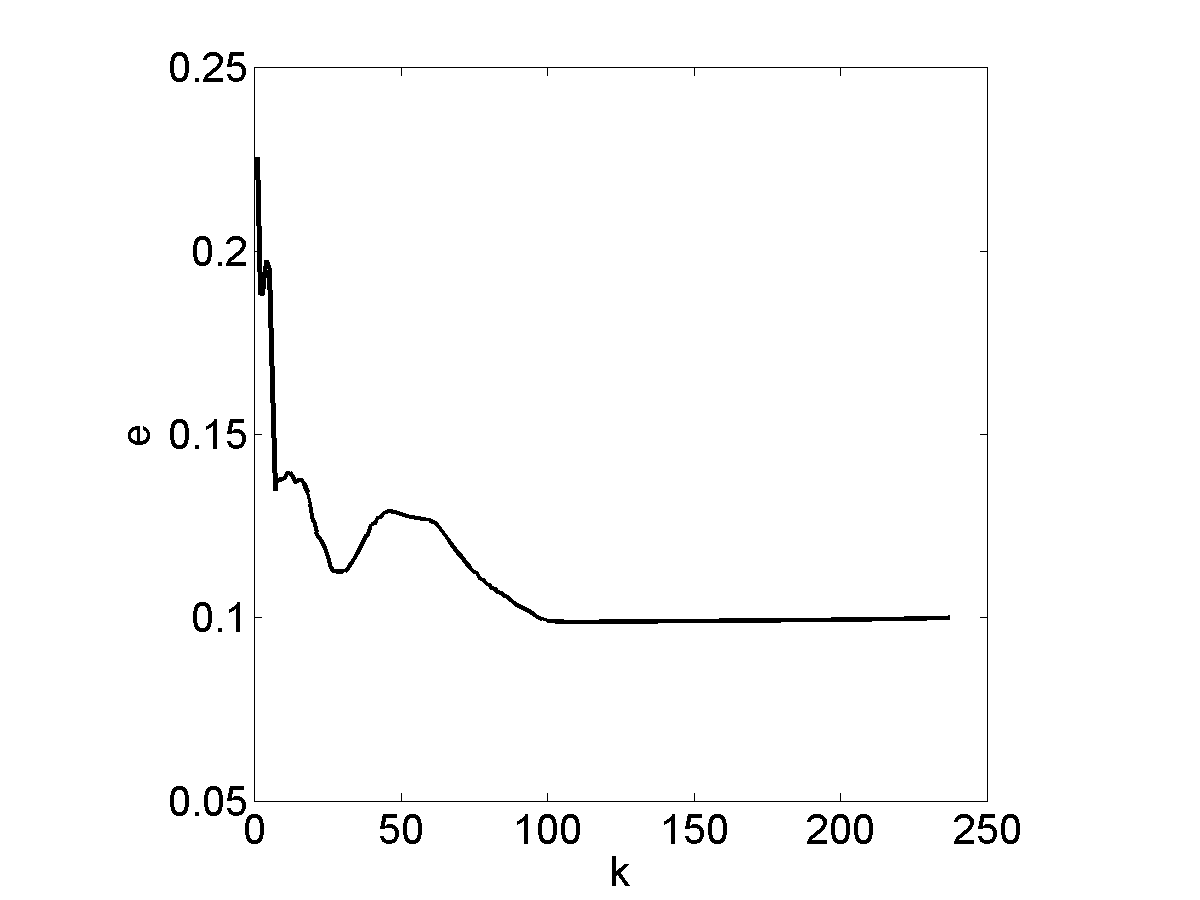}\\
    (a) reconstructions & (b) functional value $J(d_f^k)$ & (c) error $e$
  \end{tabular}
  \caption{Numerical results for Example~\ref{exam:discont}. Here the
    convergence of Algorithm~\ref{alg:cgm} is for $\varepsilon=2\%$
    noise.}
  \label{fig:exam2conv}
\end{figure}

A last example considers the recovery of a more complex coefficient profile.
\begin{exam}\label{exam:disc2}
  The boundary condition and initial condition of the problem are
  identical with those in Example~\ref{exam:cont}. The exact
  coefficient $d_f^\dagger(x)$ is given by
  $d_f^\dagger=1-\frac{1}{2}\chi_{[-\frac{7}{8},-\frac{3}{8}]}+\frac{1}{2}\chi_{[\frac{5}{8},\frac{9}{8}]}$.
\end{exam}

Here the true solution has more refined details, and hence the spatial
mesh size $h$ is accordingly refined to $\frac{1}{8}$ for a better
resolution. The results for Example~\ref{exam:disc2} are shown in
Figure~\ref{fig:exam3conv} and Table~\ref{tab:err}. The convergence of
the numerical reconstruction with respect to the noise level is again
observed, see Table~\ref{tab:err}. The observations for the previous
example remain valid: the numerical reconstructions roughly capture
the profile of the true solution, but fail to resolve accurately the
discontinuities, and the algorithm converges steadily and reasonably
quick.

\begin{figure}
  \centering
  \begin{tabular}{ccc}
    \includegraphics[width=5.5cm]{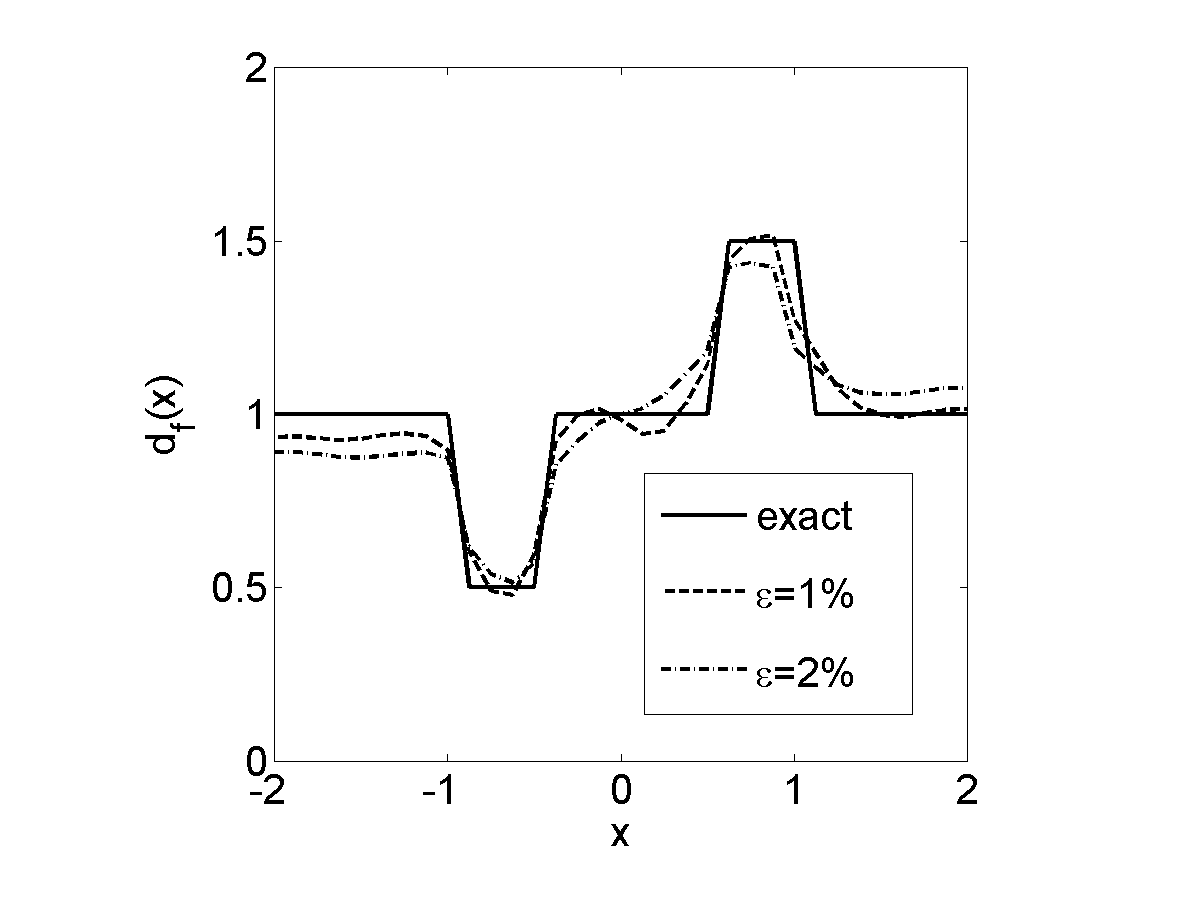} & \includegraphics[width=5.5cm]{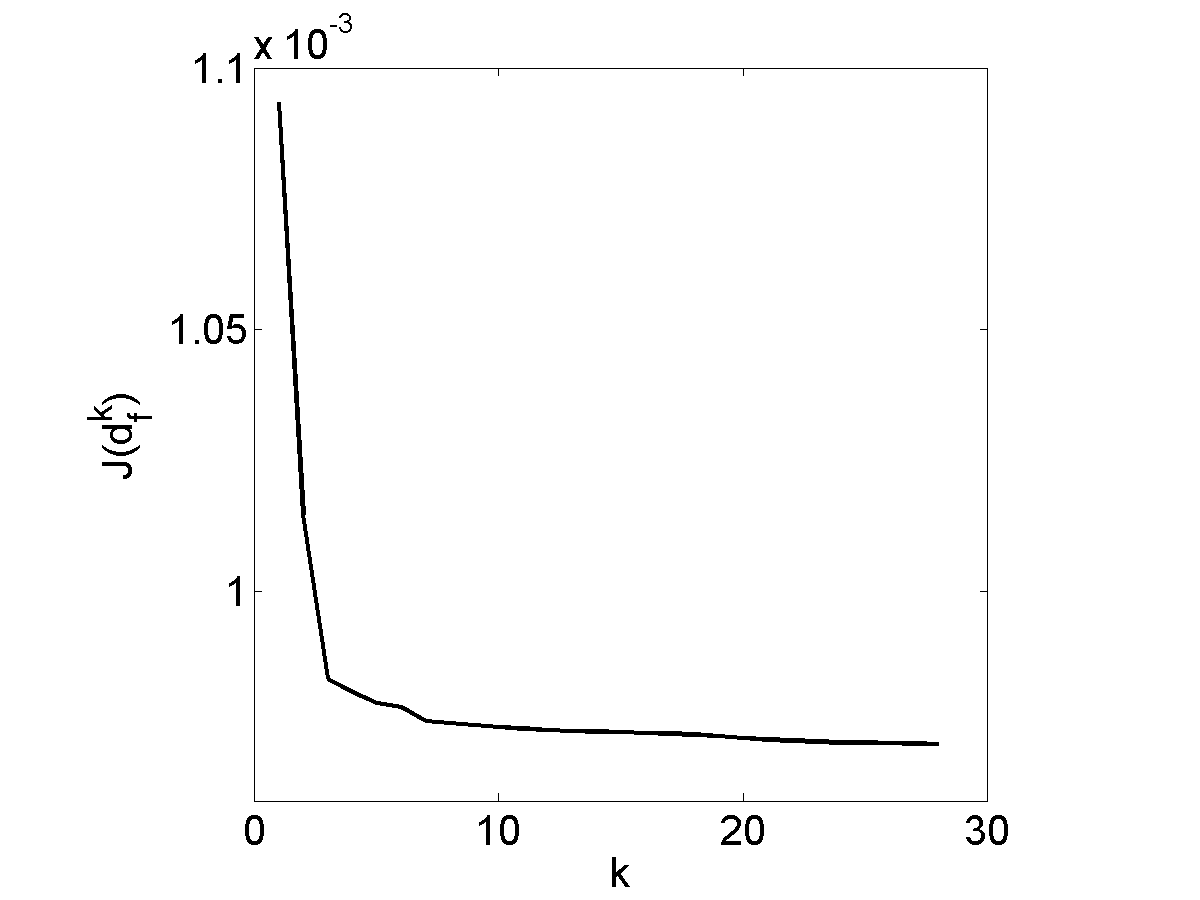}
     & \includegraphics[width=5.5cm]{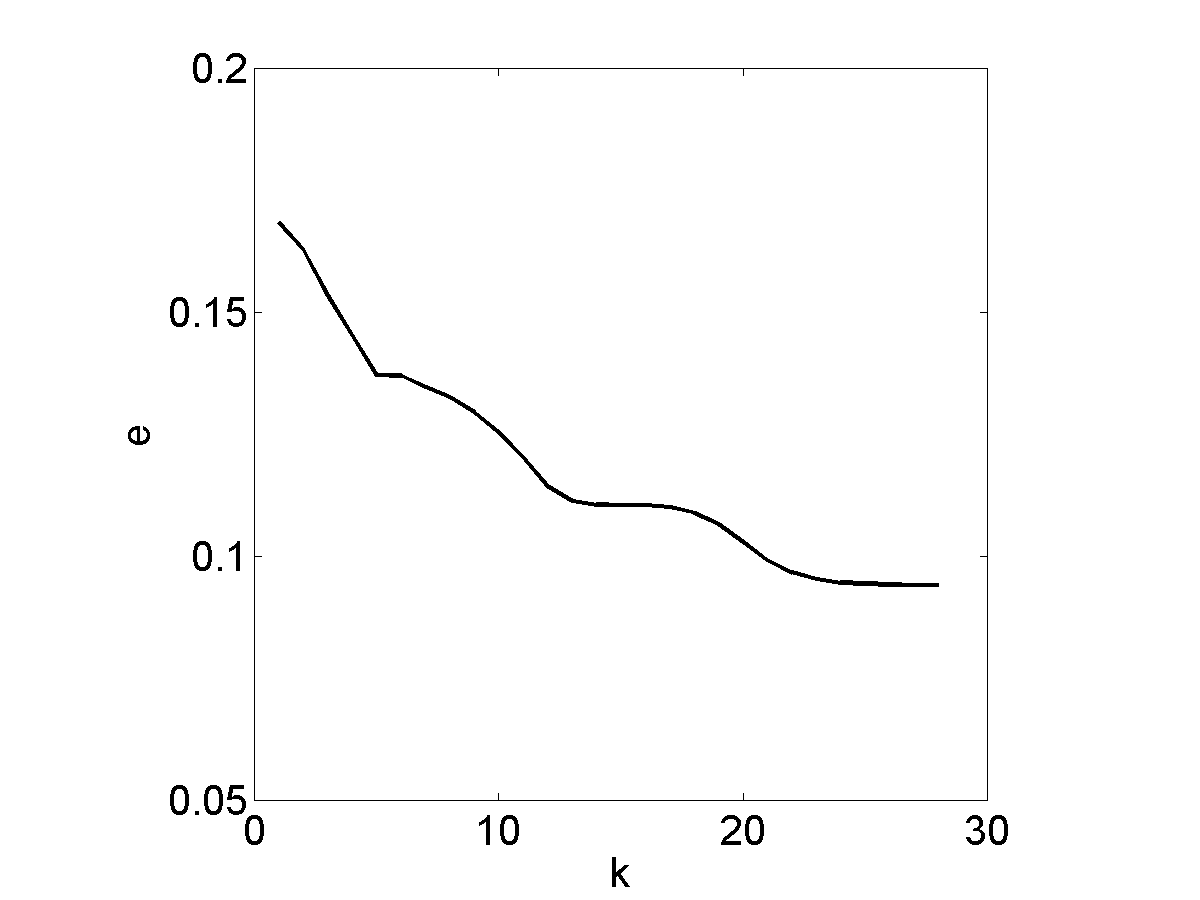}\\
    (a) reconstructions & (b) functional value $J(d_f^k)$ & (c) error $e$
  \end{tabular}
  \caption{Numerical results for Example~\ref{exam:disc2}. Here the
    convergence of Algorithm~\ref{alg:cgm} is for $\varepsilon=2\%$
    noise.}
  \label{fig:exam3conv}
\end{figure}

\section{Concluding remarks}
We have presented an inversion technique for estimating the Manning's
coefficient in the diffusive wave approximation of the shallow water
equations. The results show that the proposed approach is capable of yielding
an accurate and stable estimate in the presence of noise.  We have also detailed a
careful study of the properties of the forward map, in particular, we
discuss its continuity and differentiability based on maximal
regularity theory for parabolic problems. The mathematical analysis,
such as, convergence and convergence rates, of such an inversion
technique remains to be investigated.  Also the evaluation of the
method on real data is of significant interest.

\section*{Acknowledgements}
This work was initiated while V.M.C. was a Visiting Professor at the
Institute for Applied Mathematics and Computational Science (IAMCS),
Texas A\&M University, College Station. The work of M.G. was carried
out during his visit at IAMCS. They would like to thank the institute
for the kind hospitality and support. The work of B.J. is supported by
Award No. KUS-C1-016-04, made by King Abdullah University of Science
and Technology (KAUST).

\appendix

\section{Properties of the forward map}\label{app:derivative}
In this part, we briefly discuss the continuity and differentiability
of the forward map $F: L^\infty\rightarrow L^2(0,T;H^1(\Omega)),\ d_f
\mapsto u(d_f)$ based on maximal regularity theory for parabolic
problems~\cite{Groger:1992}. The conditions in Theorem~\ref{thm:cont}
impose a certain regularity constraint on the coefficient $d_f$ as
well as on the boundary and initial conditions. Such mapping
properties are essential for analyzing commonly used regularization
schemes, for example, Tikhonov regularization and Landweber iteration
for solving the inverse problem, and for establishing the convergence
of numerical algorithms.

We first show the Lipschitz continuity of the forward map $F$.
\begin{thm}\label{thm:cont}
  Assume that $\|u(d_f)\|_{L^\infty}$ and $\|\nabla
  u(d_f)\|_{L^\infty}$ are uniformly bounded, and that $d_f$, $|\nabla
  u(d_f)|$ and $(u(d_f)-z)$ are strictly positive, and further the
  gradient $|\nabla (t\,u(d_f)+(1-t)u(\tilde{d}_f))|$ is strictly
  positive for all $t\in(0,1)$ and $d_f,\tilde{d}_f$ in the admissible
  set $\mathcal{A}$. Then if $\gamma$ is sufficiently close to unity,
  the mapping $F:L^\infty \to L^2(0,T;H^1(\Omega))$ given by $d_f
  \mapsto u(d_f)$ is Lipschitz continuous on $\mathcal{A}$.
\end{thm}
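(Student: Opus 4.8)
The plan is to fix two coefficients $d_f,\tilde d_f\in\mathcal{A}$ with corresponding solutions $u=u(d_f)$ and $\tilde u=u(\tilde d_f)$, set $w=u-\tilde u$, and derive a \emph{linear} parabolic problem satisfied by $w$ whose source is proportional to $d_f-\tilde d_f$; maximal parabolic regularity for this linear problem then delivers the Lipschitz bound. First I would subtract the weak formulations for $u$ and $\tilde u$. Since the forcing, the Dirichlet, Neumann and initial data coincide, this leaves $(w_t,\phi)$ with $w(0)=0$ together with the difference of the nonlinear fluxes $\mathbf{q}(d_f,u):=k(u,\nabla u)\nabla u$ tested against $\nabla\phi$.

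Next I would split the flux difference as
\[
  \mathbf{q}(d_f,u)-\mathbf{q}(\tilde d_f,\tilde u)=\bigl[\mathbf{q}(d_f,u)-\mathbf{q}(\tilde d_f,u)\bigr]+\bigl[\mathbf{q}(\tilde d_f,u)-\mathbf{q}(\tilde d_f,\tilde u)\bigr].
\]
The first bracket is linear in the coefficient and equals $(d_f-\tilde d_f)\tfrac{(u-z)^\alpha}{|\nabla u|^{1-\gamma}}\nabla u=:\mathbf{f}$, a divergence-form source. For the second bracket I would apply the fundamental theorem of calculus along the segment $u_s=s\,u+(1-s)\tilde u$, $s\in(0,1)$, using exactly the G\^ateaux derivative computed in Section~2; this produces the linearized flux $k_{\eta\eta}(u_s,\nabla u_s)\nabla w+\tfrac{\alpha k(u_s,\nabla u_s)}{u_s-z}\,w\,\nabla u_s$ integrated in $s$. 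Here the hypothesis that $|\nabla(s\,u+(1-s)\tilde u)|$ is strictly positive for every $s\in(0,1)$ is essential: it guarantees that $k_{\eta\eta}(u_s,\nabla u_s)$ and the convection coefficient are well defined and free of the singularity of $|\cdot|^{1-\gamma}$ along the entire path, so the integral representation is legitimate. Collecting terms, $w$ solves the linear divergence-form problem
\[
  (w_t,\phi)+(A\nabla w,\nabla\phi)+(\mathbf{b}\,w,\nabla\phi)=-(\mathbf{f},\nabla\phi),\qquad w(0)=0,
\]
with $A=\int_0^1 k_{\eta\eta}(u_s,\nabla u_s)\,ds$ and $\mathbf{b}=\int_0^1 \tfrac{\alpha k(u_s,\nabla u_s)}{u_s-z}\nabla u_s\,ds$.

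The core of the argument is to verify that this problem meets the hypotheses of Gr\"oger's maximal regularity framework~\cite{Groger:1992} with constants uniform over $\mathcal{A}$. I would check that $A$ is bounded and uniformly elliptic: since the eigenvalues of $I-(1-\gamma)\tilde\eta\otimes\tilde\eta$ lie in $[\gamma,1]$ and $k(u,\nabla u)$ is bounded above and below (from the assumed $L^\infty$ bounds on $u,\nabla u$ together with strict positivity of $d_f$, $u-z$, $|\nabla u|$, which in particular prevents $|\nabla u_s|^{1-\gamma}$ from vanishing in the denominator of $k$), the tensor satisfies $\lambda|\xi|^2\le A\xi\cdot\xi\le\Lambda|\xi|^2$ with $\lambda,\Lambda$ depending only on the admissible-set bounds. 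The same bounds give $\|\mathbf{b}\|_{L^\infty}$ finite and $\|\mathbf{f}\|_{L^2(0,T;L^2)}\le C\|d_f-\tilde d_f\|_{L^\infty}$. Treating the first-order term $(\mathbf{b}\,w,\nabla\phi)$ as a lower-order perturbation, absorbed via Young's inequality and Gronwall's lemma starting from $w(0)=0$, maximal parabolic regularity then yields
\[
  \|w\|_{L^2(0,T;H^1(\Omega))}\le C\,\|\mathbf{f}\|_{L^2(0,T;L^2(\Omega))}\le C\,\|d_f-\tilde d_f\|_{L^\infty},
\]
which is the asserted Lipschitz continuity.

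I expect the main obstacle to be controlling the anisotropy of the principal part and securing a G\aa rding constant uniform over $\mathcal{A}$, which is precisely where the hypothesis that $\gamma$ is close to unity enters: as $\gamma\to1$ the tensor $I-(1-\gamma)\tilde\eta\otimes\tilde\eta$ tends to the identity, so $A$ becomes nearly isotropic and Gr\"oger's mixed-boundary $W^{1,p}$ estimate applies with exponent $p$ near $2$ and a controlled constant. A secondary technical point is justifying the mean-value representation of the flux difference and the energy identity rigorously; this requires the linear difference problem to be well posed with $w\in L^2(0,T;H^1)$ and $w_t\in L^2(0,T;(H^1)^*)$, again supplied by the maximal regularity theory, the strict positivity of $|\nabla u_s|$ along the whole segment being what keeps the integrand nonsingular and makes this step admissible.
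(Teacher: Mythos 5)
Your proposal is correct in substance and ends at the same energy estimate the paper derives, but it organizes the linearization of the flux difference differently, and the comparison is instructive. The paper splits $k(u,\nabla u;\tilde d_f)\nabla u-k(\tilde u,\nabla\tilde u;\tilde d_f)\nabla\tilde u$ additively, keeps the isotropic term $(k(u,\nabla u;\tilde d_f)\nabla v,\nabla v)$ as the coercive part, and bounds the gradient-difference contribution by Cauchy--Schwarz; this produces a term of size $C(1-\gamma)\|\nabla v\|_{L^2}^2$ with uncontrolled sign, and absorbing it into the coercivity constant $C_K$ is precisely where the hypothesis that $\gamma$ is close to unity is used. You instead integrate the exact G\^ateaux derivative along the segment $u_s$, which yields the averaged principal part $A=\int_0^1 k_{\eta\eta}(u_s,\nabla u_s)\,ds$; since, as you yourself observe, the eigenvalues of $I-(1-\gamma)\tilde\eta\otimes\tilde\eta$ lie in $[\gamma,1]$, this $A$ is uniformly elliptic for \emph{every} $\gamma\in(0,1]$, so your route in fact dispenses with the closeness of $\gamma$ to unity --- a modest but genuine improvement over the paper's argument. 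This makes your closing diagnosis internally inconsistent: $\gamma\approx 1$ is not needed to control the anisotropy of $A$ (your own eigenvalue bound gives $A\xi\cdot\xi\ge\gamma\,k_{\min}|\xi|^2$), and the $L^2(0,T;H^1(\Omega))$ conclusion does not require Gr\"oger's maximal regularity at all --- the standard energy framework (test with $w$, Young's inequality to absorb the convection term, Gr\"onwall from $w(0)=0$) closes the argument, which is exactly what the paper does; Gr\"oger's theorem is reserved for the $W^{1,p}$, $p>2$, refinement in Theorem~\ref{thm:contWp}. Apart from this misattribution, every step you outline --- the boundedness of $A$, $\mathbf{b}$ and $\mathbf{f}$ from the $L^\infty$ bounds and the strict positivity of $u_s-z$ and $|\nabla u_s|$ along the segment, and the final Gr\"onwall step --- matches the paper's estimates.
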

\begin{proof}
  We denote by $k\left(u,\nabla
    u;d_f\right)=d_f\frac{(u-z)^\alpha}{|\nabla u|^{1-\gamma}}$ and
  $\tilde{u}=u(\tilde{d}_f)$, and let $v=u-\tilde{u}$. We denote the
  bilinear form parametrized by $d_f$ as
\begin{align*}
  B(u,w;d_f)= (u_t,w) + \left(k(u,\nabla u;d_f)\nabla u,\nabla
    w\right),
\end{align*}
By subtracting the bilinear forms $B(u,w;d_f) =
  \ell(w)$ and $B(\tilde{u},w,\tilde{d_f}) = \ell(w)$ and choosing
  $w=v$, we arrive at
\begin{equation*}
  0 = \left(u_t -\tilde{u}_t,w\right) + \left(k(u,\nabla u;d_f)\nabla
  u-k(\tilde{u},\nabla \tilde{u};\tilde{d_f})\nabla \tilde{u},\nabla w\right),
\end{equation*}
which by virtue of the assumptions on $u$ and $\nabla u$ can be
rearranged into
\begin{equation*}
  \begin{aligned}
    \dfrac{1}{2}\partial_t\|v\|^2_{L^2} + C_K\|\nabla v\|^2_{L^2}
    \leq& -\left(k(u,\nabla u;{d_f-\tilde{d_f}})\nabla
    u,\nabla v\right)\\ &-\left(\left(k(u,\nabla
    u;{\tilde{d_f}})-k(\tilde{u},\nabla
    \tilde{u};{\tilde{d_f}})\right)\nabla \tilde{u},\nabla
    v\right):=I+II,
\end{aligned}
\end{equation*}
where $C_K$ is the coercivity constant for the bilinear form $B(\cdot,\cdot)$.  
Using Cauchy-Schwarz inequality and Young's inequality, the first
summand $I$ on the right hand side can be estimated as follows
\begin{equation*}
  I\le C(\epsilon_1) \| d_f -\tilde{d_f}\|^2_{L^\infty} + \epsilon_1\|\nabla v\|^2_{L^2}.
\end{equation*}
Meanwhile, we split the nonlinear term in the bracket in the second summand $II$ into
\begin{equation}\label{eqn:kdecom}
  k(u,\nabla u;\tilde{d_f})
  -k(\tilde{u},\nabla \tilde{u};\tilde{d_f})
  =d_f\left[\frac{ \left(u-z\right)^\alpha\left( |\nabla{\tilde{u}}|^{1-\gamma} - |\nabla{u}|^{1-\gamma}\right)}
    {|\nabla{u}|^{1-\gamma}|\nabla{\tilde{u}}|^{1-\gamma}}
    + \frac{\left(u-z\right)^\alpha - \left(\tilde{u}-z\right)^\alpha}{|\nabla{\tilde{u}}|^{1-\gamma}}\right].
\end{equation}
Now the mean value theorem gives
\begin{equation}\label{eqn:taylorz1}
  \left(u-z\right)^\alpha - \left(\tilde{u}-z\right)^\alpha = \alpha \left(\bar{u}-z\right)^{\alpha - 1}v,
\end{equation}
where $\bar{u}$ is an element between $u$ and $\tilde{u}$, and also by
means of the Taylor expansion
\begin{equation}\label{eqn:taylorgrad1}
  |\nabla{\tilde{u}}|^{1-\gamma}-|\nabla{u}|^{1-\gamma} = \left(1-\gamma\right)\mathbf{v}\cdot\nabla v,
\end{equation}
and the function \[\mathbf{v}=\int_0^1\frac{\nabla(u-sv)}{
  |\nabla(u-sv)|^{1+\gamma}}ds,\] which by assumption is bounded in
$L^\infty$. Consequently by Young's inequality, we get
\begin{equation*}
  \begin{aligned}
    II \leq& \left(1-\gamma\right) \|k(u,\nabla u,\tilde{d_f})
    \|_{L^\infty}\|\mathbf{v}\|_{L^\infty}
    \|\nabla{\tilde{u}}\|^\gamma_{L^\infty}\| \nabla v\|^2_{L^2} + C
    \left(\epsilon_2^{-1}\|v\|^2_{L^2} + \tfrac{\epsilon_2}{4}\|\nabla v\|^2_{L^2} \right)\\
    \leq &C \left(\left(1-\gamma\right)+\epsilon_2\right) \|\nabla v\|^2_{L^2} +
    C\epsilon_2^{-1} \|v\|^2_{L^2}.
 \end{aligned}
\end{equation*}
Since $\gamma$ is close to unity and for sufficiently small
$\epsilon_1$, $\epsilon_2$, $\mu:=C\left(\left(1-\gamma\right)+\epsilon_1 +
\epsilon_2\right)< C_K$, we obtain
\begin{equation*}
  \begin{aligned}
    \tfrac{1}{2}\partial_t\|v\|^2_{L^2} + \left(C_K-\mu\right) \|\nabla
    v\|^2_{L^2} \le C\epsilon_2^{-1} \| v\|^2_{L^2} + C \| d_f
    -\tilde{d_f}\|^2_{L^\infty}
\end{aligned}
\end{equation*}
Now an application of Gr\"ownwall's inequality leads to
\begin{equation*}
  \|v \|^2_{L^2} + \int_0^T \|\nabla v\|^2_{L^2}ds \le C \| d_f -\tilde{d_f}\|^2_{L^\infty}
\end{equation*}
upon noting the condition $u(0) = \tilde{u}(0)$.
\end{proof}

Our next result improves the regularity of the map in
Theorem~\ref{thm:cont} by invoking Gr\"{o}ger's maximal regularity
theory~\cite{Groger:1992}, which is needed for the differentiability.
\begin{thm}\label{thm:contWp}
  Let the assumptions in Theorem~\ref{thm:cont} be fulfilled. Then the
  mapping $F:L^\infty \to L^2(0,T;W^{1,p}(\Omega))$, $d_f \mapsto
  u(d_f)$ is Lipschitz continuous for some $p\in(2,\infty)$.
\end{thm}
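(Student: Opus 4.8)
The plan is to recast the difference $v=u-\tilde u$ (with $u=u(d_f)$, $\tilde u=u(\tilde d_f)$ as in Theorem~\ref{thm:cont}) as the solution of a single linear, non-autonomous parabolic equation whose principal part is the divergence-form operator $-\nabla\cdot(k(u,\nabla u;d_f)\nabla\,\cdot\,)$ and whose remaining contributions are collected into a divergence-form right-hand side $\nabla\cdot F$. Writing the flux difference as
\begin{equation*}
  k(u,\nabla u;d_f)\nabla u - k(\tilde u,\nabla\tilde u;\tilde d_f)\nabla\tilde u = k(u,\nabla u;d_f)\nabla v + \big[k(u,\nabla u;d_f)-k(\tilde u,\nabla\tilde u;\tilde d_f)\big]\nabla\tilde u
\end{equation*}
and expanding the bracket exactly through the decomposition \eqref{eqn:kdecom} together with \eqref{eqn:taylorz1} and \eqref{eqn:taylorgrad1}, the difference satisfies $v_t-\nabla\cdot(k(u,\nabla u;d_f)\nabla v)=\nabla\cdot F$ with $v(0)=0$. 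The goal is then to run a $W^{1,p}$ maximal-regularity estimate on this equation and to bootstrap from the $H^1$ bound already furnished by Theorem~\ref{thm:cont}.

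The first step is the elliptic input. Under the standing assumptions the scalar coefficient $k(u,\nabla u;d_f)$ is bounded above and bounded away from zero, so the principal operator is uniformly elliptic with bounded, measurable coefficients on the Lipschitz domain $\Omega$ with the mixed condition on $\Gamma_D\cup\Gamma_N$. Gr\"oger's theorem then supplies an exponent $p\in(2,\infty)$, depending only on $\Omega$, $\Gamma_D$ and the ellipticity constants, for which $-\nabla\cdot(k\nabla\,\cdot\,)$ is a topological isomorphism from $W^{1,p}_{\Gamma_D}(\Omega)$ onto its dual $W^{-1,p}_{\Gamma_D}(\Omega)$. The part of the flux proportional to $(1-\gamma)\,\mathbf{v}\cdot\nabla v$ arising from \eqref{eqn:taylorgrad1} is a genuine gradient term, which I would absorb into the principal coefficient; for $\gamma$ close to unity this is a small bounded perturbation that preserves both ellipticity and Gr\"oger's isomorphism, exactly the smallness device used in Theorem~\ref{thm:cont}. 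Since the perturbed operator remains a sectorial isomorphism on $W^{-1,p}_{\Gamma_D}$, the associated parabolic problem enjoys maximal $L^2$-regularity on that scale, and with $v(0)=0$ one obtains
\begin{equation*}
  \|v\|_{L^2(0,T;W^{1,p}(\Omega))} \le C\,\|\nabla\cdot F\|_{L^2(0,T;W^{-1,p}(\Omega))} \le C\,\|F\|_{L^2(0,T;L^p(\Omega))}.
\end{equation*}

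It then remains to bound $F$ in $L^2(0,T;L^p)$. After the substitutions \eqref{eqn:taylorz1} and \eqref{eqn:taylorgrad1}, $F$ consists of the coefficient-difference flux $(d_f-\tilde d_f)\frac{(u-z)^\alpha}{|\nabla u|^{1-\gamma}}\nabla u$ together with a term proportional to $\alpha(\bar u-z)^{\alpha-1}|\nabla\tilde u|^{\gamma-1}\,v\,\nabla\tilde u$; by the assumed $L^\infty$ bounds and the strict positivity all prefactors lie in $L^\infty$, so that
\begin{equation*}
  \|F\|_{L^2(0,T;L^p(\Omega))} \le C\,\|v\|_{L^2(0,T;L^p(\Omega))} + C\,\|d_f-\tilde d_f\|_{L^\infty(\Omega)}.
\end{equation*}
Because $d\le 2$, the Sobolev embedding $H^1(\Omega)\hookrightarrow L^p(\Omega)$ holds for the finite exponent $p$ produced by Gr\"oger's theorem, whence $\|v\|_{L^2(0,T;L^p)}\le C\|v\|_{L^2(0,T;H^1)}$, and the latter is controlled by $\|d_f-\tilde d_f\|_{L^\infty}$ via Theorem~\ref{thm:cont}. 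Combining the three displays yields $\|v\|_{L^2(0,T;W^{1,p})}\le C\|d_f-\tilde d_f\|_{L^\infty}$, the claimed Lipschitz continuity.

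I expect the main obstacle to be the maximal-regularity step rather than the bookkeeping: one must justify maximal $L^2$-regularity in the $W^{-1,p}$ scale for a \emph{non-autonomous} operator whose coefficient depends on $t$ through $u(t)$ and is merely bounded and measurable in time, and check that the $(1-\gamma)$-perturbation and the time dependence do not destroy Gr\"oger's isomorphism uniformly in $t$. This is precisely where Gr\"oger's framework and its parabolic extensions are indispensable. A secondary point is that Gr\"oger's exponent $p$ must be compatible with the embedding $H^1\hookrightarrow L^p$; this is automatic for $d\le 2$ since $p$ is finite, but would fail in higher dimensions, which is the reason the result is confined to $d\le 2$.
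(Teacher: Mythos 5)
Your proposal is correct and follows the paper's own strategy in all essentials: the same flux splitting via \eqref{eqn:kdecom}, the same Taylor expansions \eqref{eqn:taylorz1}--\eqref{eqn:taylorgrad1}, the same absorption of the $(1-\gamma)\mathbf{v}\cdot\nabla v$ perturbation into the principal part using the smallness of $1-\gamma$, and the same appeal to Gr\"oger's maximal parabolic regularity theorem for the resulting non-autonomous divergence-form operator with bounded measurable coefficients. The one genuine organizational difference is the treatment of the zeroth-order term $\alpha(\bar u-z)^{\alpha-1}|\nabla\tilde u|^{\gamma-1}v\,\nabla\tilde u$: the paper keeps it inside the operator $A$ and verifies a G\r{a}rding-type coercivity $(Av,v)\ge c_A\|\nabla v\|_{L^2}-C_A\|v\|_{L^2}$ together with boundedness before invoking Gr\"oger, whereas you move it to the right-hand side and close the estimate by bootstrapping from the $L^2(0,T;H^1)$ Lipschitz bound of Theorem~\ref{thm:cont} via the embedding $H^1(\Omega)\hookrightarrow L^p(\Omega)$ for $d\le 2$. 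Your variant buys a cleaner hypothesis check (only the principal part plus the small gradient perturbation must satisfy Gr\"oger's conditions, sidestepping the implicit exponential-shift needed to remove the $-C_A\|v\|_{L^2}$ defect in the paper's coercivity), at the cost of making the step explicitly dependent on Theorem~\ref{thm:cont} and on the low spatial dimension; the paper's variant is self-contained in the operator but leaves those coercivity and continuity verifications largely asserted. Both routes land on the same estimate $\|v\|_{L^2(0,T;W^{1,p})}\le C\|d_f-\tilde d_f\|_{L^\infty}$, and your concluding caveat about uniform-in-time validity of Gr\"oger's isomorphism is exactly the point the paper also leans on its citation of \cite{Groger:1992} to cover.
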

\begin{proof}
  As before, we denote by \[k(u,\nabla
  u;d_f)=d_f\frac{\left(u-z\right)^\alpha}{|\nabla u|^{1-\gamma}}\] and
  $\tilde{u}=u(\tilde{d}_f)$, and let $v=u-\tilde{u}$. Then $v$ solves
\begin{equation*}
  v_t + Av = f
\end{equation*}
with
\begin{equation*}
  Av = -\nabla \cdot \left(\left(k(u,\nabla u;\tilde{d_f})-k(\tilde{u},\nabla \tilde{u};\tilde{d_f})\right)\nabla \tilde{u}
    + k(u,\nabla u;\tilde{d}_f)\nabla v\right)
\end{equation*}
and $f = \nabla \cdot \left(k(u,\nabla u;d_f-\tilde{d_f})\nabla
u\right)$. Clearly, $f\in L^p(0,T;(W^{1,p})')$ for $d_f-\tilde{d_f} \in L^p$
because the remaining terms are uniformly bounded in $L^\infty$. To
apply Gr\"{o}ger's theorem~\cite[Theorem 2.1]{Groger:1992}, we only
need to show the coercivity and boundedness of the operator $A$
defined above. By using the Taylor expansions~\eqref{eqn:taylorz1}
and~\eqref{eqn:taylorgrad1} in the splitting~\eqref{eqn:kdecom}, we
can rearrange the differential $A$ into
\begin{equation*}\begin{aligned}
    \left(Av,w\right) &= \left(k(u,\nabla u;\tilde{d}_f)\left(1-\gamma\right)|\nabla
      \tilde{u}|^{\gamma-1}
      \mathbf{v}\cdot\nabla v\nabla\tilde{u},\nabla w\right)\\
    &-\left(\tilde{d_f}\alpha
      \left(\bar{u}-z\right)^{\alpha-1}|\nabla{\tilde{u}}|^{\gamma-1}v\nabla\tilde{u},\nabla
      w\right) + \left(k(u,\nabla u;\tilde{d}_f)\nabla v,\nabla
      w\right).
\end{aligned}\end{equation*}
In view of the strict positivity of the term $k(u,\nabla
u;\tilde{d}_f)$, that the parameter $\gamma$ is close to one and that the
quantities $u,\,\nabla u$ etc. are uniformly bounded, we deduce that
\begin{equation*}
  \left(Av,v\right)\geq c_A\|\nabla v\|_{L^2} - C_A\|v\|_{L^2}.
\end{equation*}
for some constants $c_A,\,C_A>0$. Hence, the associated matrix-valued
coefficient in the differential operator is pointwise bounded from
below and above away from zero. The continuity of the operator follows
similarly. Consequently, an application of Gr\"{o}ger's
theorem~\cite{Groger:1992} directly yields the desired estimate
$\int_0^T\|v(s)\|_{W^{1,p}}^2ds \leq C \|d\|_{L^\infty}$ for some
$p\in(2,\infty)$.
\end{proof}

\begin{rmk}
  The exponent $p\in(2,\infty)$ in Theorem~\ref{thm:contWp} depends on
  the spatial dimension, the pointwise upper and lower bounds of the
  conductivity $k(u,\nabla u;d_f)$ and the smoothness of the domain
  $\Omega$; see~\cite{Groger:1992} for details.
\end{rmk}

Next we show the boundedness of the linearized map.
\begin{thm}\label{thm:bdd}
  Let the assumptions in Theorem~\ref{thm:cont} be fulfilled, and the
  linear map $F':L^\infty \to L^2(0,T;H^1(\Omega))$ be defined by $d
  \mapsto v$, with $v$ given by
\begin{equation*}
  \begin{aligned}
    \left(v_t,w\right) + &\left(k(u,\nabla u)[I-\left(1-\gamma\right)\tilde{\eta}\otimes\tilde{\eta}]\cdot \nabla v,\nabla w\right)\\
    & + \left(d_f\alpha\frac{\left(u-z\right)^{\alpha-1}v}{|\nabla
        u|^{1-\gamma}}\nabla u,\nabla w\right) =
    -\left(d\frac{\left(u-z\right)^\alpha}{|\nabla u|^{1-\gamma}}\nabla u,\nabla
      w\right),
  \end{aligned}
\end{equation*}
with the initial condition $v(0)=0$. Then the linear map $F'$ is
bounded.
\end{thm}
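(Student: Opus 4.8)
The plan is to prove the boundedness of $F'$ by deriving the energy estimate $\|v\|_{L^2(0,T;H^1(\Omega))}\le C\|d\|_{L^\infty}$ directly, in close analogy with the proof of Theorem~\ref{thm:cont}. Since the sensitivity problem is already linear in $v$ and its right-hand side depends linearly on $d$, no linearization or mean value theorem is needed, and the estimate reduces to a standard parabolic energy argument. First I would test the weak formulation with $w=v$, so that the time-derivative term becomes $(v_t,v)=\tfrac{1}{2}\partial_t\|v\|_{L^2}^2$, and keep the source term on the right-hand side.

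The key structural point is the coercivity of the attenuated diffusion tensor $k_{\eta\eta}(u,\nabla u)=k(u,\nabla u)(I-(1-\gamma)\tilde\eta\otimes\tilde\eta)$. Because $\tilde\eta\otimes\tilde\eta$ is an orthogonal projection, the matrix $I-(1-\gamma)\tilde\eta\otimes\tilde\eta$ has eigenvalues $\gamma$ (in the direction $\tilde\eta$) and $1$ (tangential), hence is positive definite with smallest eigenvalue $\gamma>0$. Combined with the strict positivity and $L^\infty$-boundedness of $k(u,\nabla u)$ guaranteed by the hypotheses on $u$, $\nabla u$, $u-z$ and $d_f$, this yields $(k_{\eta\eta}(u,\nabla u)\nabla v,\nabla v)\ge c_K\|\nabla v\|_{L^2}^2$ for some $c_K>0$. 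I would note that, in contrast to Theorems~\ref{thm:cont} and~\ref{thm:contWp}, coercivity of the linearized operator holds for every $\gamma\in(0,1]$, so the requirement that $\gamma$ be close to unity is not actually needed here.

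Next I would estimate the remaining two terms by Cauchy--Schwarz and Young's inequality. The reaction term $(d_f\alpha(u-z)^{\alpha-1}|\nabla u|^{\gamma-1}v\,\nabla u,\nabla v)$ has an $L^\infty$-bounded coefficient under the assumptions, so it is dominated by $\epsilon_1\|\nabla v\|_{L^2}^2+C(\epsilon_1)\|v\|_{L^2}^2$; similarly, the source term $-(d\,(u-z)^\alpha|\nabla u|^{\gamma-1}\nabla u,\nabla v)$ is dominated by $\epsilon_2\|\nabla v\|_{L^2}^2+C(\epsilon_2)\|d\|_{L^\infty}^2$, using that the vector field $(u-z)^\alpha|\nabla u|^{\gamma-1}\nabla u$ is bounded in $L^\infty(\Omega)$ and that $\Omega$ is bounded. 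Choosing $\epsilon_1,\epsilon_2$ small enough that $\epsilon_1+\epsilon_2<c_K$ and absorbing the gradient contributions into the coercive term gives $\tfrac{1}{2}\partial_t\|v\|_{L^2}^2+(c_K-\epsilon_1-\epsilon_2)\|\nabla v\|_{L^2}^2\le C\|v\|_{L^2}^2+C\|d\|_{L^\infty}^2$.

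Finally, an application of Gr\"onwall's inequality together with the initial condition $v(0)=0$ yields $\|v(t)\|_{L^2}^2+\int_0^T\|\nabla v\|_{L^2}^2\,ds\le C\|d\|_{L^\infty}^2$, which is exactly the asserted boundedness of $F'$. I expect the main obstacle to be the bookkeeping around the absorption step: one has to confirm that the reaction coefficient is genuinely bounded in $L^\infty$ --- which is precisely where the strict positivity of $u-z$ and of $|\nabla u|$ enters --- so that the Young splitting is legitimate and the gradient terms can be absorbed into $c_K\|\nabla v\|_{L^2}^2$. A minor technical point, as in Theorem~\ref{thm:cont}, is that the manipulation of $(v_t,v)$ should be justified at the Galerkin level before passing to the limit, but this is routine.
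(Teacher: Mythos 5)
Your proposal is correct and follows essentially the same route as the paper's proof: test with $w=v$, use the positive definiteness of $I-(1-\gamma)\tilde{\eta}\otimes\tilde{\eta}$ together with the strict positivity of $k(u,\nabla u;d_f)$ for coercivity, absorb the reaction and source terms via Cauchy--Schwarz and Young, and conclude with Gr\"onwall's inequality and $v(0)=0$. Your observation that $\gamma$ need not be close to unity here is exactly the point made in the remark following the theorem in the paper.
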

\begin{proof}
Insert $w := v$ to get
\begin{equation*}
\begin{aligned}
  \tfrac{1}{2}\partial_t\|v\|^2_{L^2(\Omega)} + &\left(k(u,\nabla
  u;d_f)\nabla v, [I-\left(1-\gamma\right)\tilde{\eta}\otimes\tilde{\eta}]
  \nabla v\right)\\
  & = - \left(d_f\alpha\frac{\left(u-z\right)^{\alpha-1}v}{|\nabla
      u|^{1-\gamma}}\nabla u,\nabla v\right)
  -\left(d\frac{\left(u-z\right)^\alpha}{|\nabla u|^{1-\gamma}}\nabla u,\nabla
    v\right):=I+II.
\end{aligned}
\end{equation*}
Using Cauchy-Schwarz inequality and Young's inequality, the term $I$
can be bounded by
\begin{equation*}
  I \le  C(\epsilon_1) \|d_f\|^2_{L^\infty} \|\left(u-z\right)^{\alpha-1} |\nabla u|^\gamma\|^2_{L^\infty} \|v\|^2_{L^2}
  + \epsilon_1\|\nabla v\|^2_{L^2},
\end{equation*}
where $\epsilon_1>0$ is arbitrary. Similarly, the term $II$ can be
bounded by: for any $\epsilon_2>0$
\begin{equation*}
  II\le C(\epsilon_2) \| d \|^2_{L^\infty}  \|d_f\|^2_{L^\infty}  \|\left(u-z\right)^\alpha|\nabla u|^\gamma\|^2_{L^2}
  + \epsilon_2\|\nabla v\|^2_{L^2}.
\end{equation*}
Recall that $\gamma$ is strictly less than unity, and hence
$I-\left(1-\gamma\right)\tilde{\eta}\otimes\tilde{\eta}$ is strictly
positive definite, and the diffusion coefficient $k(u,\nabla u;d_f)$
is strictly positive (independent of $d_f$). Therefore, these
estimates altogether give
\begin{equation*}
  \tfrac{1}{2}\partial_t\|v\|^2_{L^2(\Omega)} + \|\nabla v\|^2_{L^2} \leq C \left(\|v\|^2_{L^2} + \| d \|^2_{L^\infty}\right).
\end{equation*}
Applying Gr\"onwall's inequality and noting that $v(0) = 0$, the
desired assertion follows.
\end{proof}

\begin{rmk}
  The condition that the parameter $\gamma$ is close to $1$ is not
  required in Theorem~\ref{thm:bdd}. A direct application of
  Gr\"{o}ger's theorem indicates that the map $F':L^\infty\mapsto
  L^p(0,T;W^{1,p}(\Omega))$ is also bounded.
\end{rmk}

Finally, we show the Fr\'{e}chet differentiability of the forward map.
\begin{thm}
  Let the assumptions in Theorem~\ref{thm:cont} be fulfilled, and the
  bounded linear map $u'(d_f)d$ be defined in
  Theorem~\ref{thm:bdd}. Then $u'(d_f)d$ is the Fr\'{e}chet derivative
  of the map $d_f \rightarrow u(d_f)$, i.e.,
\begin{equation*}
  \lim_{\|d\|_{L^\infty}\to 0} \frac{\|u(d_f + d)-u(d_f) - u'(d_f)d \|_{L^2(0,T;H^1(\Omega))}}{\|d\|_{L^\infty}} = 0.
\end{equation*}
\end{thm}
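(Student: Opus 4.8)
The plan is to bound the remainder by reducing everything to the well-posedness of the linearized problem already established. Write $u=u(d_f)$, $\tilde u=u(d_f+d)$, $v=u'(d_f)d$ for the linearized solution of Theorem~\ref{thm:bdd}, and set $r=\tilde u-u-v$, so that $r(0)=0$. The first step is to derive a parabolic equation for $r$ whose principal part is exactly the linearized (convection--diffusion) operator of Theorem~\ref{thm:bdd}. Subtracting the weak forms $B(\tilde u,w;d_f+d)=\ell(w)$ and $B(u,w;d_f)=\ell(w)$, and then subtracting the defining equation for $v$, one obtains, for all admissible $w$,
\[
  (r_t,w)+\big(k_{\eta\eta}(u,\nabla u)\nabla r,\nabla w\big)+\Big(\tfrac{\alpha\,k(u,\nabla u)}{u-z}\,r\,\nabla u,\nabla w\Big)=(G,\nabla w),
\]
where $G$ gathers all contributions that are not linear in the increments. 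The task is thus to show that $G$ is quadratically small and then to invert the linear operator on the left.

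The second step identifies $G$ explicitly by Taylor's theorem with integral remainder applied to the flux $\Phi(s,\xi;c)=c\,(s-z)^\alpha|\xi|^{\gamma-1}\xi$, which is smooth on a neighbourhood of the range of $(u,\nabla u;d_f)$ thanks to the strict positivity of $d_f$, $u-z$ and $|\nabla u|$ and the uniform $L^\infty$ bounds assumed in Theorem~\ref{thm:cont}. The first-order part of the expansion reproduces precisely the linearized flux frozen at $(u,\nabla u;d_f)$ and is cancelled by the equation for $v$; consequently $G$ consists only of the second-order Taylor remainder, bilinear in $(\tilde u-u,\,\nabla(\tilde u-u),\,d)$, together with the discrepancy between the linearization coefficients evaluated at $u$ and those evaluated along the segment $t\,\tilde u+(1-t)u$ (this is where the splitting~\eqref{eqn:kdecom} and the expansions~\eqref{eqn:taylorz1}--\eqref{eqn:taylorgrad1} are used). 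Every term of $G$ is therefore at least quadratic, comprising products such as $|\nabla(\tilde u-u)|^2$, $(\tilde u-u)\nabla(\tilde u-u)$, $d\,\nabla(\tilde u-u)$ and $d\,(\tilde u-u)$, each multiplied by an $L^\infty$-bounded factor.

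The third step estimates $\|G\|$ in the appropriate dual norm and inverts the left-hand operator. Since the left-hand side is exactly the operator whose boundedness and coercivity were verified in Theorems~\ref{thm:bdd} and~\ref{thm:contWp}, I would apply the same machinery to the $r$-equation: either the energy estimate of Theorem~\ref{thm:bdd} (insert $w=r$, use Cauchy--Schwarz and Young, exploit the strict positive-definiteness of $I-(1-\gamma)\tilde\eta\otimes\tilde\eta$ and the positivity of $k$, and close with Gr\"onwall using $r(0)=0$) or Gr\"oger's maximal regularity estimate~\cite{Groger:1992} applied to the remainder equation. Either route yields $\|r\|_{L^2(0,T;H^1)}\le C\|G\|$. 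Combining the quadratic structure of $G$ with the Lipschitz bounds $\|\tilde u-u\|\le C\|d\|_{L^\infty}$ of Theorems~\ref{thm:cont} and~\ref{thm:contWp} then gives $\|r\|_{L^2(0,T;H^1)}\le C\|d\|_{L^\infty}^2$, which is the claimed $o(\|d\|_{L^\infty})$.

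The hard part will be controlling the purely quadratic gradient term $|\nabla(\tilde u-u)|^2$. The bare Lipschitz estimate of Theorem~\ref{thm:cont} only places $\nabla(\tilde u-u)$ in $L^2$, so that $|\nabla(\tilde u-u)|^2\in L^1$, which is too weak to pair against $\nabla w$ for $w\in H^1$. This is exactly the role of Theorem~\ref{thm:contWp}: with $\nabla(\tilde u-u)\in L^p$ for some $p>2$, the square lies in $L^{p/2}$ with $p/2>1$, and a H\"older estimate against the test-gradient, combined with the embedding $W^{1,p}\hookrightarrow H^1$ on the bounded time interval, closes the pairing and delivers the quadratic bound on $G$; applying Gr\"oger's theorem to the remainder equation and then embedding $L^p(0,T;W^{1,p})\hookrightarrow L^2(0,T;H^1)$ is the cleanest way to absorb the shortfall when $p$ is not large. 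A second point requiring care is the singularity of $\Phi$ in the gradient variable, whose second derivatives grow like $|\nabla u|^{\gamma-3}$; the integral Taylor remainder must therefore be taken along the segment $t\,\tilde u+(1-t)u$, whose gradient is assumed strictly positive uniformly in $t\in(0,1)$ in Theorem~\ref{thm:cont}, so that the remainder kernel stays bounded in $L^\infty$.
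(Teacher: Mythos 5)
Your proposal follows essentially the same route as the paper: form the remainder $w=\tilde u-u-\bar u$, derive its evolution equation with the linearized operator on the left, expand the flux to second order with integral remainders (using the strict positivity of $u-z$ and of the gradient along the segment), and close with an energy estimate plus Gr\"onwall, relying on the $W^{1,p}$ Lipschitz bound of Theorem~\ref{thm:contWp} to control the quadratic gradient terms. The one point of divergence is how the shortfall for $p<4$ is absorbed: the paper does not re-apply Gr\"oger to the remainder equation (which would only return $W^{1,p/2}$ regularity and fails to embed into $H^1$), but instead interpolates $\|\nabla v\|_{L^q}\le C\|\nabla v\|_{L^p}^{\delta}$ against the assumed $L^\infty$ gradient bounds, so the final estimate is $\|w\|\le C\|d\|_{L^\infty}^{1+\delta}$ rather than the quadratic rate you claim --- still $o(\|d\|_{L^\infty})$ and hence sufficient for Fr\'echet differentiability.
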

\begin{proof}
  We denote by $k(u,\nabla
  u;d_f)=d_f\frac{\left(u-z\right)^\alpha}{|\nabla u|^{1-\gamma}}$ and
  $\tilde{d}_f=d_f+d$, $\tilde{u}=u(\tilde{d}_f)$, $u = u(d_f)$ and
  $\bar{u}=u'(d_f)d$ and let $v=\tilde{u}-u$,
  $w=\tilde{u}-u-\bar{u}$. We also denote
  $D(u)=\left(1-\gamma\right)\tilde{\eta}\otimes\tilde{\eta}$. Then
  it directly follows from the weak formulations for $\tilde{u}$, $u$
  and $\bar{u}$ that
\begin{equation*}
  \begin{aligned}
    \left(w_t,w\right) + &\left(k(\tilde{u},\nabla
        \tilde{u};\tilde{d}_f)\nabla \tilde{u}
      -k(u,\nabla u;\tilde{d}_f)\nabla u
      ,\nabla w\right)\\
    =& \left(k(u,\nabla u;d_f)\left(I-D(u)\right) \nabla
          \bar{u},\nabla w\right) +
        \left(d_f\alpha\frac{\left(u-z\right)^{\alpha-1}\bar{u}}{|\nabla
            u|^{1-\gamma}}\nabla u,\nabla w\right),
  \end{aligned}
\end{equation*}
which upon rearrangement and noting the assumptions on $u$ and $\nabla
u$ yields
\begin{equation*}
  \begin{aligned}
    (w_t,w) + (k(u,\nabla u;\tilde{d}_f)\nabla w,\nabla w)
    =&\underbrace{((k(u,\nabla
      u;\tilde{d}_f)-k(\tilde{u},\nabla \tilde{u};\tilde{d}_f))\nabla
      \tilde{u},\nabla w)}_{I}
    \underbrace{- (k(u,\nabla u;d)\nabla\bar{u},\nabla w)}_{II}\\
    & - \underbrace{(k(u,\nabla u;d_f)D(u) \nabla\bar{u},\nabla
      w)}_{III}
    +\underbrace{\left(d_f\alpha\frac{(u-z)^{\alpha-1}\bar{u}}{|\nabla
          u|^{1-\gamma}}\nabla u,\nabla w\right)}_{IV}.
  \end{aligned}
\end{equation*}
It suffices to estimate the four terms on the right hand side. First,
by means of Cauchy-Schwarz inequality and Young's inequality, the term
$II$ can be estimated by
\begin{equation*}
   \begin{aligned}
      |II| \leq & C\|d\|_{L^\infty}\|\nabla \bar{u}\|_{L^2}\|\nabla w\|_{L^2},
   \end{aligned}
\end{equation*}
To bound the first term $I$, we further split it into
\begin{equation*}
  \begin{aligned}
    I=\left(k(u,\nabla u;\tilde{d}_f)\frac{(
        |\nabla{\tilde{u}}|^{1-\gamma} - |\nabla{u}|^{1-\gamma})}
      {|\nabla{\tilde{u}}|^{1-\gamma}}\nabla\tilde{u},\nabla w\right)
    +\left(\tilde{d}_f\frac{(u-z)^\alpha -
        (\tilde{u}-z)^\alpha}{|\nabla{\tilde{u}}|^{1-\gamma}}\nabla\tilde{u},\nabla
      w\right):=V+VI.
  \end{aligned}
\end{equation*}
Now we employ the Taylor expansion
\begin{equation*}
  |\nabla \tilde{u}|^{1-\gamma} = |\nabla u|^{1-\gamma}+(1-\gamma)|\nabla u|^{-\gamma-1}\nabla u\cdot\nabla v
  + \boldsymbol{K}\nabla v^2
\end{equation*}
with the matrix-valued function $\boldsymbol{K}$ given by
\begin{equation*}
  \boldsymbol{K}=-\int_0^1(1-t)\left((1-\gamma^2)\boldsymbol{\phi}(t)\boldsymbol{\phi}(t)^\mathrm{t}|\boldsymbol{\phi}(t)|^{-\gamma-3}
    + (1-\gamma)|\boldsymbol{\phi}(t)|^{-1-\gamma}\boldsymbol{I} \right)dt
\end{equation*}
and $\boldsymbol{\phi}(t)=\nabla (u + tv)$. With the help of this
expansion, we derive that
\begin{equation*}\begin{aligned}
    V-III &=(k(u,\nabla u;d_f)D(u) \nabla w,\nabla w)+
    (1-\gamma)\underbrace{ \left(k(u,\nabla u;d_f)\frac{\nabla
          u\cdot\nabla v}{|\nabla u|} \left(\frac{|\nabla
            u|^{1-\gamma}\nabla\tilde{u}}{|\nabla
            \tilde{u}|^{1-\gamma}|\nabla u|}
          -\frac{\nabla u}{|\nabla u|}\right),\nabla w\right)}_{VII}\\
    &+(1-\gamma)\underbrace{\left(k(u,\nabla u;d)\frac{\nabla
          u\cdot\nabla v}{|\nabla u|^{1+\gamma}}
        \frac{\nabla\tilde{u}}{|\nabla \tilde{u}|^{1-\gamma}},\nabla
        w\right)}_{VIII} + \underbrace{\left(k(u,\nabla
        u;\tilde{d}_f)\frac{\boldsymbol{K}\nabla v^2}{|\nabla
          \tilde{u}|^{1-\gamma}} \nabla\tilde{u},\nabla w\right)}_{IX} \\
\end{aligned}\end{equation*}
Next we estimate the terms on the right-hand side one by one. First,
let $p$ be the exponent from theorem~\ref{thm:contWp} and choose $q>2$
such that $\frac{1}{p} + \frac{1}{q} = \frac{1}{2}$. Then by the
uniform $L^\infty$ boundness of $u$ and $\nabla u$ (also $\tilde{u}$,
$\nabla \tilde{u}$ etc.)
\begin{equation*}
  \begin{aligned}
    VII&=\left(k(u,\nabla u;d_f)\frac{\nabla u\cdot \nabla
        v}{|\nabla u|^2}|\nabla\tilde{u}|^{\gamma-1}
      \left(|\nabla u|^{1-\gamma}\nabla v + \nabla u(|\nabla u|^{1-\gamma}-|\nabla \tilde{u}|^{1-\gamma})\right),\nabla w\right)\\
    &\le C \|\nabla v\|_{L^p}\|\nabla v\|_{L^q}\|\nabla w\|_{L^2}
    + C \||\nabla v| (|\nabla u|^{1-\gamma}-|\nabla \tilde{u}|^{1-\gamma})\|_{L^2}\|\nabla w\|_{L^2}\\
    & \le C \|\nabla v\|_{L^p}\|\nabla v\|_{L^q}\|\nabla w\|_{L^2}
    + C \||\nabla v|^2\|_{L^2} \|\nabla w\|_{L^2}\\
    &\le C \|\nabla v\|_{L^p}\|\nabla v\|_{L^q}\|\nabla w\|_{L^2}\le C
    \|\nabla v\|^{1+\delta}_{L^p}\|\nabla w\|_{L^2},
  \end{aligned}
\end{equation*}
where in the third line we have utilized the
expansion~\eqref{eqn:taylorgrad1}, and the last line follows from the
fact that either $\|\nabla v\|_{L^q} \le C \|\nabla v\|_{L^p}$ holds
for $q<p$ or $\|\nabla v\|_{L^q} \le C \|\nabla v\|^\delta_{L^p}$
holds for some $0 < \delta < 1$ due to the $L^\infty$-boundedness of
$\nabla u$ and $\nabla\tilde u$.  Similarly, the terms $VIII$ and $IX$
can be bounded by
\begin{equation*}
  VIII \le C\|d\|_{L^\infty}\|\nabla v\|_{L^2}\|\nabla w\|_{L^2}\quad \mbox{and}\quad
  IX \le C\|\nabla v\|^{1+\delta}_{L^p}\|\nabla w\|_{L^2}.
\end{equation*}
Next we combine the terms $VI$ and $IV$. To this end, we employ the
Taylor expansion
\begin{equation*}
  (\tilde{u} - z)^\alpha = (u - z)^\alpha + \alpha (u - z)^{\alpha-1} v
  +\tfrac{1}{2} \alpha(\alpha-1) (\hat{u}-z )^{\alpha-2} v^2
\end{equation*}
with $\hat{u}$ being some function pointwise between $u$ and
$\tilde{u}$ we can estimate.  With the help of this identity, we
arrive at the following splitting
\begin{equation*}\begin{aligned}
    VI + IV &= -\left(d_f\alpha\frac{(u-z)^{\alpha-1} w}{|\nabla
        u|^{1-\gamma}}\nabla u,\nabla w\right) +
    \underbrace{\left(d_f\alpha(u-z)^{\alpha-1}v\left(\frac{\nabla
            u}{|\nabla u|^{1-\gamma}}-
          \frac{\nabla\tilde{u}}{|\nabla{\tilde{u}}|^{1-\gamma}}\right),\nabla w\right)}_X\\
    &- \underbrace{\left(d\alpha
        \frac{(u-z)^{\alpha-1}}{|\nabla{\tilde{u}}|^{1-\gamma}}v\nabla\tilde{u},\nabla
        w\right)}_{XI} -\underbrace{\tfrac{1}{2}\left(\tilde{d}_f
        \alpha(\alpha-1) \frac{(\eta -z
          )^{\alpha-2}}{|\nabla{\tilde{u}}|^{1-\gamma}} v^2 \nabla
        \tilde{u},\nabla w\right)}_{XII}
\end{aligned}
\end{equation*}
Consequently, by the uniform boundedness of the quantities $u$,
$\nabla u$ (and $\tilde{u}$, $\nabla\tilde{u}$ etc.) and Sobolev
embedding theorem, we have
\begin{equation*}
  \begin{aligned}
    X \leq C\|v\|_{W^{1,P}}^{1+\delta}\|\nabla w\|_{L^2},\quad XI \le
    C \|d\|_{L^\infty}\|v\|_{W^{1,p}} \|\nabla w\|_{L^2},\quad XII
    \leq C \|\nabla v\|^{1+\delta}_{W^{1,p}}\|\nabla w\|_{L^2}.
  \end{aligned}
\end{equation*}
These estimates, Young's inequality and that $\gamma$ is close to
unity (hence $D(u)$ can be made arbitrarily small for $\gamma$ close
to unity) yield
\begin{equation*}
  \tfrac{1}{2}\partial_t\|w\|^2_{L^2} + \tfrac{C_K}{2}\|\nabla w\|^2_{L^2}\leq
  C\left(\|d\|_{L^\infty}^2\|v\|_{W^{1,p}}^2+\|v\|_{W^{1,p}}^{2+2\delta}+\|w\|_{L^2}^2\right).
\end{equation*}
Finally, an application of Gr\"onwall's inequality and
Theorem~\ref{thm:contWp} lead to
\begin{equation*}
  \|w\|^2_{L^2} + \int_0^T\|\nabla w\|^2_{L^2}ds \le C \|d\|^{2+2\delta}_{L^\infty}
\end{equation*}
upon noting the initial condition $w(0) =0$. This concludes the proof.
\end{proof}

\begin{rmk}
An inspection of the proof indicates that the assumptions on the 
solution $u(d_f)$ and gradient can be greatly relaxed 
if the parameter $\gamma=1$. The latter case is analogous to 
the porous media equation, and thus the results are of independent interest.
\end{rmk}
\section{Generalized-$\alpha$ method}\label{app:gam}
In this appendix, we describe the generalized-$\alpha$ method. Note
that for the full discretization of the forward problem, each time
step involves solving a highly nonlinear (and possibly also stiff)
system. Hence a careful treatment of the time stepping is required. To
this end, we employ the so-called generalized-$\alpha$ method together
with a predictor-corrector
method~\cite{JansenWhitingHulbert:2000,ColierRadwanDalcinCalo:2011}. For
a first-order system, the method can be stated as follows: given
$(u_n,\dot{u}_n)$, find
$(u_{n+1},\dot{u}_{n+1},u_{n+\alpha_f},\dot{u}_{n+\alpha_m})$ such
that
\begin{equation*}
  \left\{
  \begin{aligned}
    R(u_{n+\alpha_f},\dot{u}_{n+\alpha_m}) & = 0,\\
    u_{n+\alpha_f} &= u_n + \alpha_f(u_{n+1}-u_n),\\
    \dot{u}_{n+\alpha_m} &= \dot{u}_{n} + \alpha_m(\dot{u}_{n+1}-\dot{u}_{n}),\\
    u_{n+1} & = u_n + \Delta t ((1-\gamma)\dot{u}_n+\gamma
    \dot{u}_{n+1}),
  \end{aligned}\right.
\end{equation*}
where $\Delta t= t_{n+1}-t_n$ is the time step size, $\alpha_f$,
$\alpha_m$ and $\gamma$ are real valued parameters of the method, and
$R(u_{n+\alpha_f},\dot{u}_{n+\alpha_m})$ denotes the (discrete)
residual of the nonlinear system. For a linear model problem,
unconditional stability of the scheme is attained if $\alpha_m\geq
\alpha_f\geq\frac{1}{2}$, and a second-order accuracy can be achieved
with the choice $\gamma=\frac{1}{2}
+\alpha_m-\alpha_f$~\cite{JansenWhitingHulbert:2000}. The method can
be succinctly parameterized by the spectral radius $\rho_\infty$ into
a one-parameter family. Then the parameters $\alpha_m$, $\alpha_f$ and
$\gamma$ can be expressed as~\cite{JansenWhitingHulbert:2000}
\begin{equation*}
  \alpha_f = \frac{1}{1+\rho_\infty},\quad \alpha_m=\frac{3-\rho_\infty}{2(1+\rho_\infty)}, \quad \gamma=\frac{1}{1+\rho_\infty}.
\end{equation*}
A complete description of the generalized-$\alpha$ method is given in
Algorithm~\ref{alg:gam}. It is of predictor/corrector type with
correctors computed by a Newton method, where the superscript indices
indicate the corrector steps within the loop. In our implementation,
we have set $\rho_\infty=0.1$, and the tolerance $\epsilon$ in the
stopping criterion to $1.0\times10^{-6}$ and the maximum number of
iterations ($\mathrm{MaxIter}$) to $20$. The major computational
effort of Algorithm~\ref{alg:gam} lies in calculating the Jacobian
matrix $K_{n+1}^{(i)}$ for the Newton system, i.e., step $5$. For
large-scale problems, iterative solvers, e.g., GMRES or BiCGstab,
which requires only matrix-vector multiplication, are
preferable~\cite{JansenWhitingHulbert:2000}.

\begin{algorithm}[t]
	\caption{Generalized-$\alpha$ method.\label{alg:gam}}
\begin{algorithmic}[1]
  \STATE Compute predictor $u_{n+1}^{(0)}=u_n$ and
  $\dot{u}_{n+1}^{(0)}=\frac{\gamma-1}{\gamma}\dot{u}_n$, and set
  $i=0$.  \STATE Set the initial guess of $u_{n+\alpha_f}^{(0)}$ and
  $u_{n+\alpha_m}^{(0)}$ as
    \begin{equation*}
      u_{n+\alpha_f}^{(0)}=u_n+\alpha_f(u_{n+1}^{(0)}-u_n)\quad \mbox{and}\quad
      \dot{u}_{n+\alpha_m}^{(0)}=\dot{u}_n+\alpha_m(\dot{u}_{n+1}^{(0)}-\dot{u}_n).
           \end{equation*}
           \WHILE {$i<\mathrm{MaxIter}$} \STATE Evaluate the Newton
           residual $\displaystyle R_{n+1}^{(i)} =
           R(u_{n+\alpha_f}^{(i)},\dot{u}_{n+\alpha_m}^{(i)})$.
           \STATE Calculate the Jacobian
          \begin{equation*}
            K_{n+1}^{(i)}=\frac{\partial R(u_{n+\alpha_f}^{(i)},\dot{u}_{n+\alpha_m}^{(i)})}{\partial u_{n+\alpha_f}}
            +\alpha_m[\alpha_f\gamma\Delta t]^{-1}\frac{\partial R(u_{n+\alpha_f}^{(i)},
              \dot{u}_{n+\alpha_m}^{(i)})}{\partial \dot{u}_{n+\alpha_m}}.
          \end{equation*}
          \STATE Solve Newton system for the corrector $\Delta
          u_{n+1}^{(i)}$ from $K_{n+1}^{(i)}\Delta
          u_{n+1}^{(i)}=-R_{n+1}^{(i)}$.  \STATE Update the solutions
          $u_{n+\alpha_f}^{(i+1)}$ and $\dot{u}_{n+\alpha_m}^{(i+1)}$
          by
          \begin{equation*}
             \begin{aligned}
               u_{n+\alpha_f}^{(i+1)}&=u_{n+1}^{(i)} + \Delta u_{n+1}^{(i)},\\
               \dot{u}_{n+\alpha_m}^{(i+1)}&=(1-\gamma^{-1}\alpha_m)\dot{u}_{n+1}^{(i)}
               +\alpha_m[\gamma\Delta t\alpha_f]^{-1}(u_{n+\alpha_f}^{(i+1)}-u_n).
             \end{aligned}
          \end{equation*}
       \STATE Check the stopping criterion: if {$\displaystyle
            \|R_{n+1}^{(i)}\|\leq\epsilon\|R_{n+1}^{(0)}\|$}, stop iteration.
       \STATE Increase index $i=i+1$.
    \ENDWHILE
       \STATE Output the solutions $u_{n+1}$ and $\dot{u}_{n+1}$ by
           \begin{equation*}
               u_{n+1} = u_n+\alpha_f^{-1}(u_{n+\alpha_f}^{(\mathrm{MaxIter})}-u_n) \quad \mbox{and}\quad
               \dot{u}_{n+1} =\dot{u}_n + \alpha_m^{-1}(\dot{u}_{n+\alpha_m}^{(\mathrm{MaxIter})}-\dot{u}_n).
           \end{equation*}
\end{algorithmic}
\end{algorithm}

\bibliographystyle{abbrv}
\bibliography{dsw}
\end{document}